\DeclareSymbolFont{cyrletters}{OT2}{wncyr}{m}{n}
\DeclareFontFamily{OT1}{rsfs}{}
     \DeclareFontShape{OT1}{rsfs}{n}{it}{<-> rsfs10}{}
\DeclareMathAlphabet{\mathscr}{OT1}{rsfs}{n}{it}
\newcommand{\SL}{\mathrm{SL}}
\numberwithin{equation}{section}
\numberwithin{table}{section}
\numberwithin{figure}{section}
\newtheorem{Theorem}{Theorem}[section]
\newtheorem{Proposition}{Proposition}[section]
\newtheorem{Lemma}{Lemma}[section]
\newtheorem*{Question*}{Question}
\theoremstyle{remark}
\newtheorem*{Remark}{Remark}
    \title{Random lattice vectors in a set of size $O(n)$}
\renewcommand{\email}[2][]{%
  \ifx\emails\@empty\relax\else{\g@addto@macro\emails{,\space}}\fi%
  \@ifnotempty{#1}{\g@addto@macro\emails{\textrm{(#1)}\space}}%
  \g@addto@macro\emails{#2}%
}
\author{Seungki Kim$^\dag$}
\address[\dag]{Department of Mathematics, Korea Institute of Advanced Study}
\email{seungki.math@gmail.com}
\begin{document}

\begin{abstract}
We adopt the sieve ideas of Schmidt \cite{Schmidt1} \cite{Schmidt2} and S\"odergren \cite{Sangle} in order to study the statistics of vectors of a random lattice of dimension $n$ contained in a set of volume $O(n)$. We also give some sporadic applications of our results to number theory.
\end{abstract}

\maketitle

\section{introduction}

We define a lattice to be a discrete cocompact subgroup of $\mathbb{R}^n$ with vector addition. The set of all $n$-dimensional lattices of covolume (or determinant) 1 is modeled as $X_n := \SL(n,\mathbb{Z}) \backslash \SL(n, \mathbb{R})$: the coset $\SL(n, \mathbb{Z})g$ corresponds to the lattice $\mathbb{Z}^n g \subseteq \mathbb{R}^n$. By Siegel \cite{Siegel} there exists a probability measure $\mu_n$ on $X_n$, inherited from the Haar measure of $\SL(n,\mathbb{R})$, that is invariant under the action of $\SL(n,\mathbb{R})$ on $X_n$ by right multiplication. The pair $(X_n, \mu_n)$ provides the standard notion of a \emph{random lattice}.

The purpose of the present paper is to discuss the following
\begin{Question*}
For $V \in \mathbb{R}_{\geq 0}$, denote by $B(V) \subseteq \mathbb{R}^n$ the open ball of volume $V$ centered at the origin. If $L$ is a random lattice, how is $L \cap B(V)$ distributed, in particular in the limit as $n \rightarrow \infty$?
\end{Question*}

If $V$ is fixed relative to $n$, many results have been established which suggest that $L \cap B(V)$ looks ``random'' in a certain sense. For instance, if $L$ is a random lattice in the sense of $\mu_n$,

\begin{Theorem}[Rogers \cite{Rogers}] \label{thm_intro_rogers}
Let $V \in \mathbb{R}_{\geq 0}$ be fixed. Then as $n \rightarrow \infty$, the distribution of $\frac{1}{2}|(L \backslash \{0\}) \cap B(V)|$, the number of nonzero vectors of $L$ up to sign that are contained in $B(V)$, converges weakly to the Poisson distribution of mean $V/2$.
\end{Theorem}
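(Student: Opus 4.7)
The natural approach is the method of factorial moments. Setting $N(L) = \tfrac{1}{2}|(L\setminus\{0\}) \cap B(V)|$, and recalling that the Poisson distribution of mean $\lambda$ has $k$-th factorial moment $\lambda^k$ and is determined by its moments (Carleman's condition), it suffices to show that for each fixed $k \geq 1$,
\[
\lim_{n \to \infty} \mathbb{E}_{\mu_n}\bigl[N(N-1)\cdots(N-k+1)\bigr] = (V/2)^k.
\]
Weak convergence to Poisson$(V/2)$ then follows from the standard method-of-moments argument.

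The plan is to expand $N(N-1)\cdots(N-k+1)$ as $2^{-k}$ times a sum over ordered $k$-tuples $(v_1,\dots,v_k) \in ((L\setminus\{0\})\cap B(V))^k$ whose underlying unordered $\pm$-pairs are distinct, and then evaluate the expectation using the Rogers--Siegel integral formula. More precisely, I would first compute
\[
\mathbb{E}_{\mu_n}\Big[\,\sum_{v_1,\dots,v_k \in L\setminus\{0\}} \chi_{B(V)}(v_1)\cdots \chi_{B(V)}(v_k)\,\Big]
\]
via Rogers' $k$-fold mean value theorem, which writes this expectation as a main term equal to $V^k$ (coming from tuples in generic position) plus a finite sum of correction terms indexed by nontrivial $\mathbb{Z}$-linear dependence patterns among $v_1,\dots,v_k$, each expressed as an integral over a lower-dimensional subspace weighted by explicit arithmetic factors (involving determinants of integer matrices and values of $\zeta$).

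Next, I would subtract off the diagonal-type contributions where $v_i = \pm v_j$ for some $i\neq j$, using inclusion--exclusion on the sign/collision patterns, to relate the above expectation to $2^k\,\mathbb{E}[N(N-1)\cdots(N-k+1)]$. The key dimensional analysis is that each diagonal or dependency correction involves an integral over a proper subspace; for fixed $V$ and $k$ the arithmetic weights feature factors of the form $\zeta(n-j)^{-1}$ or $2^{-n}$, which converge to $1$ or $0$ as $n\to\infty$ in a way that forces every dependency contribution to vanish. The main term $V^k$ therefore survives, yielding $\mathbb{E}[N(N-1)\cdots(N-k+1)] \to (V/2)^k$.

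The principal difficulty is Step~3, the uniform control of the correction terms. Rogers' formula expresses them as an infinite sum indexed by integer matrices of integer relations, and while each individual matrix contributes an expression that is manifestly $o(1)$ as $n\to\infty$, one needs a bound that is uniform enough in the matrix to justify interchanging the limit with the summation. This is handled by extracting an absolute bound from the arithmetic factors (using $\zeta$-estimates) and isolating only finitely many terms whose contributions must be analyzed in detail, with all remaining terms dominated by a convergent majorant independent of $n$.
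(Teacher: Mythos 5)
The paper does not prove this theorem; it is cited directly from Rogers, and the paper's own machinery (Schmidt's sieve, culminating in Propositions~\ref{prop_s=m} and~\ref{prop_lpoisson}) recovers it only as a by-product of a stronger uniform statement valid for $V, k$ up to a small constant times $n$. Your moment-method sketch follows Rogers' original argument and is sound in outline, but one piece of reasoning needs tightening. The sign-collision diagonals $v_i = \pm v_j$ do \emph{not} vanish as $n\to\infty$: their Rogers contributions are of order $V^{k-1}$, $V^{k-2}$, etc., and indeed $\mathbb{E}_{\mu_n}\bigl[\sum_{v_1,\dots,v_k}\chi\cdots\chi\bigr]$ converges to the $k$-th moment of $2\cdot\mathrm{Poisson}(V/2)$, not to $V^k$. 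What your plan actually relies on is that these nondecaying terms are \emph{removed by the inclusion--exclusion} relating the unrestricted $k$-fold sum to the distinct-pair count $2^k\,N(N-1)\cdots(N-k+1)$; only the genuine integer-dependency terms (say $v_i = qv_j$ with $|q|>1$, or longer relations) carry arithmetic weights with factors like $q^{-n}$ that decay to zero for fixed $k$ and $V$. With that distinction made explicit, the argument goes through, and the uniform control of the infinite matrix sum that you flag at the end is indeed the technical crux, handled by Rogers with a majorant argument of the kind you describe. Worth noting for context: the moment method cannot be pushed to $k$ growing linearly with $n$ (the $k$-th moment of the count diverges once $k \geq n$, and the error estimates deteriorate well before that), which is precisely why the paper switches to Schmidt's sieve for its uniform results.
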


\begin{Theorem}[S\"odergren \cite{Sangle}] \label{thm_intro_s1}

Let $k \in \mathbb{Z}_{> 0}$ be fixed. Define $\nu_i(L) := U\|x_i(L)\|^n$, where $U$ equals the volume of the $n$-dimensional unit ball, and $x_i(L)$ is the $i$-th shortest nonzero vector (up to sign) of $L$. Furthermore, for $1 \leq i \leq k$ let $u_i(L) = \frac{x_i(L)}{\|x_i(L)\|} \in S^{n-1} = \{x \in \mathbb{R}^n: \|x\| = 1\}$.

As $n \rightarrow \infty$, the random variable $(\nu_1(L), \ldots, \nu_k(L), u_1(L), \ldots, u_k(L))$ converges to the joint distributions of the first $k$ arrival times of a Poisson process with intensity $\frac{1}{2}$ and the distribution of the $k$ vectors (up to sign) uniformly chosen from $S^{n-1}$.
\end{Theorem}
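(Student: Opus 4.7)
The plan is to upgrade Rogers' Theorem~\ref{thm_intro_rogers} into a full Poisson point process convergence statement, from which the claim then follows by reading off the first $k$ shortest vectors. Under the change of coordinates $x \mapsto (\nu, u) := (U\|x\|^n, x/\|x\|)$ from $\mathbb{R}^n \setminus \{0\}$ to $\mathbb{R}_{\geq 0} \times S^{n-1}$, Lebesgue measure pushes forward to $d\nu \otimes d\tilde\sigma$, where $\tilde\sigma$ is the uniform probability measure on $S^{n-1}$. In these coordinates the target limit of the theorem is precisely the law of the first $k$ atoms (ordered by $\nu$-coordinate), paired with their associated $S^{n-1}$-components, of a Poisson point process $\Xi^{\infty}$ on $\mathbb{R}_{\geq 0} \times S^{n-1}$ of intensity $\tfrac{1}{2}\, d\nu \otimes d\tilde\sigma$. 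It therefore suffices to show that the random configuration
\[
\Xi_L^{(n)} \;:=\; \bigl\{ \bigl(U\|v\|^n,\, v/\|v\|\bigr) : v \in (L \setminus \{0\})/\{\pm 1\} \bigr\}
\]
converges weakly, as a simple point process, to $\Xi^{\infty}$ as $n \to \infty$.

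By standard criteria (e.g., Kallenberg's theorem for simple point processes, or convergence of Laplace functionals), it is enough to verify that for every finite family of disjoint bounded product sets $W_j = I_j \times C_j \subseteq \mathbb{R}_{\geq 0} \times S^{n-1}$, with $I_j$ an interval and $C_j$ a spherical cap, the integer-valued vector $(|\Xi_L^{(n)} \cap W_j|)_{j=1}^m$ converges jointly in distribution to independent Poisson random variables with means $\tfrac{1}{2}(d\nu \otimes d\tilde\sigma)(W_j)$. I would establish this via the method of moments, computing all mixed factorial moments through Rogers' higher mean value formula for sums $\mathbb{E}\bigl[\sum_{v_1, \ldots, v_N \in L \setminus 0} f(v_1, \ldots, v_N)\bigr]$, which decomposes into a sum indexed by the possible patterns of $\mathbb{Z}$-linear dependence among $(v_1, \ldots, v_N)$. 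The ``generic'' pattern (all $v_i$ linearly independent, with no $\pm$-coincidences) contributes exactly the Poisson mixed factorial moment $\prod_j \tfrac{1}{k_j!}(\tfrac{1}{2}\vol(S_j))^{k_j}$, where $S_j \subseteq \mathbb{R}^n$ is the ``shell-cap'' preimage of $W_j$. The $\SO(n)$-invariance of $\mu_n$ lets us apply Rogers' radial-setting identities to non-radial test functions after $\SO(n)$-averaging, which is precisely what turns count statistics into directional statistics and accounts for the uniform $S^{n-1}$-components in the limit. Once the joint Poisson convergence is proved for shell-cap families, it extends by approximation to arbitrary disjoint bounded open subsets of $\mathbb{R}_{\geq 0} \times S^{n-1}$, yielding the desired point process limit.

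The main obstacle is controlling the ``collision'' terms in Rogers' higher moment formula — the integrals coming from tuples of lattice vectors with nontrivial $\mathbb{Z}$-linear relations — and showing that they vanish as $n \to \infty$ relative to the generic term. For fixed $k$ only shell-cap volumes of constant order are involved, which is the regime tackled by the sieve machinery of Schmidt and S\"odergren cited in the abstract; the decisive input is that, in high dimensions, a small ball of a Haar-random lattice is unlikely to contain two $\mathbb{Z}$-dependent nonzero vectors, so the collision contributions are suppressed by factors that decay in $n$. The remaining ingredients — weak convergence from moment convergence, and passage from shell-caps to general measurable sets — are comparatively routine since Poisson laws are determined by their moments and the shell-cap product sets form a $\pi$-system generating the Borel $\sigma$-algebra on $\mathbb{R}_{\geq 0} \times S^{n-1}$.
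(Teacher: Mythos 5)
Your proposal is essentially a reconstruction of S\"odergren's original argument in \cite{Sangle}: upgrade Rogers' counting result to a Poisson point process limit on $\mathbb{R}_{\geq 0} \times S^{n-1}$ via the method of moments, using the Rogers higher mean value formula and showing that the contributions of $\mathbb{Z}$-dependent tuples vanish as $n \to \infty$ for each fixed moment order. For the stated theorem, where $k$ is fixed, this works: Rogers' own estimates on the non-generic terms suffice, and weak convergence of a simple point process to a Poisson process follows once the mixed factorial moments over disjoint shell-caps converge to the Poisson ones. You do not actually need the Schmidt sieve to close the fixed-$k$ argument, and invoking ``the sieve machinery'' as the decisive input slightly misattributes the role of those tools; for fixed $k$ the decisive input is just the asymptotic smallness of the Rogers correction terms.

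The paper itself does not reprove Theorem~\ref{thm_intro_s1} by this route. It cites it and instead establishes the quantitative generalization Theorem~\ref{thm_star} via Theorems~\ref{thm_amain} and~\ref{thm_moment}, whose proofs deliberately avoid the full method of moments. The point the paper stresses is that the Rogers error terms are only known to vanish when the moment order is $O(\sqrt{n})$, so the na\"ive moment approach cannot reach $k$ growing like $n$. Schmidt's sieve (Lemmas~\ref{lemma_ssieve} and~\ref{lemma_alt}) replaces the full inclusion--exclusion over all corank patterns by a two-sided sandwich that requires estimating Rogers integrals only for corank $0$ and corank $\leq 1$ tuples, whose errors are controlled uniformly for all orders up to $n-1$. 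S\"odergren's ordering device $R^n_l$, $S^n_l$ is grafted onto this to access the ``$i$-th shortest vector'' and its direction. The trade-off is clear: your moment computation is more elementary and directly yields the classical weak limit for fixed $k$, while the paper's combined sieve buys explicit exponentially small error bounds valid for $k$ up to $Cn$ (and $k = o(n/\log n)$ for the joint length-angle statement), at the cost of a more intricate combinatorial setup.
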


\begin{Theorem}[S\"odergren \cite{Sangle}] \label{thm_intro_s2}
Continue with the notations of Theorem \ref{thm_intro_s1}. Let $\lambda_i(L)$ denote the $i$-th successive minimum of $L$. Then
\begin{equation*}
\mathrm{Prob}_{\mu_n}(\|x_i(L)\| = \lambda_i(L), \forall i = 1, \ldots, k) \rightarrow 1,
\end{equation*}
as $n \rightarrow \infty$. In particular, Theorem \ref{thm_intro_s1} would also hold if we defined instead $\nu_i(L) := U \lambda_i(L)^n$.
\end{Theorem}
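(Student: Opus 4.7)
The plan is to reduce Theorem \ref{thm_intro_s2} to Theorem \ref{thm_intro_s1} via an essentially combinatorial observation about shortest vectors, followed by a soft concentration statement on the sphere.

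The first step is to note that, for any lattice $L$, one has $\|x_i(L)\| = \lambda_i(L)$ for all $i = 1, \ldots, k$ if and only if $x_1(L), \ldots, x_k(L)$ are linearly independent. Indeed, by definition of the $x_i(L)$, the only nonzero vectors of $L$ (up to sign) of norm at most $\|x_j(L)\|$ are $x_1(L), \ldots, x_j(L)$; so any $j$ linearly independent vectors witnessing $\lambda_j(L) \leq \|x_j(L)\|$ must be chosen from among $\pm x_1(L), \ldots, \pm x_j(L)$. This forces $x_1(L), \ldots, x_j(L)$ themselves to be linearly independent; and conversely, if they are, they realize $\lambda_j(L) = \|x_j(L)\|$. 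In particular, if $x_1(L), \ldots, x_j(L)$ span fewer than $j$ dimensions, then $\lambda_j(L)$ is strictly larger than $\|x_j(L)\|$.

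It therefore suffices to show that $\mathrm{Prob}_{\mu_n}(x_1(L), \ldots, x_k(L) \text{ are linearly independent}) \to 1$. Since $x_i(L) = \|x_i(L)\| u_i(L)$ with $\|x_i(L)\| > 0$, this is the same as linear independence of $u_1(L), \ldots, u_k(L)$. By Theorem \ref{thm_intro_s1}, the distribution of $(u_1(L), \ldots, u_k(L))$ converges to that of $k$ independent uniform vectors on $S^{n-1}$. For independent uniform unit vectors in $\mathbb{R}^n$, the off-diagonal entries $\langle u_i, u_j \rangle$ of the Gram matrix have mean $0$ and variance $1/n$, so they tend to $0$ in probability, and hence the Gram matrix converges in probability to the $k \times k$ identity. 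Since the identity has determinant $1 \neq 0$ and the determinant is continuous, the Gram matrix is nonsingular with probability tending to $1$. This gives linear independence of the $u_i(L)$, and hence of the $x_i(L)$, with probability tending to $1$.

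The main obstacle, and the only nontrivial point, is making rigorous the passage from Theorem \ref{thm_intro_s1} --- which as stated describes a limit toward a distribution on a sphere $S^{n-1}$ whose dimension is itself growing --- to the quantitative statement that $\langle u_i(L), u_j(L) \rangle \to 0$ in probability. One route is to inspect the proof of Theorem \ref{thm_intro_s1} in \cite{Sangle} and verify that it supplies convergence of the relevant low-order inner products at the required rate; a parallel route, more in the spirit of the sieve methods emphasized in this paper, is to bound directly the $\mu_n$-expected number of pairs of short lattice vectors lying in a common proper subspace via a Rogers/Schmidt-type mean-value computation and show this expectation tends to $0$. Either route is substantially softer than the analysis behind Theorem \ref{thm_intro_s1} itself.
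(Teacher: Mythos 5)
The paper cites Theorem~\ref{thm_intro_s2} without proof and instead proves the strengthening Proposition~\ref{prop_s=m} ($k$ up to $(C/4)n$, exponential error rate). Your combinatorial reduction --- that $\|x_i(L)\|=\lambda_i(L)$ for $i=1,\dots,k$ if and only if $x_1(L),\dots,x_k(L)$ are linearly independent, modulo the almost-sure distinctness of lengths (Lemma~5.1 of \cite{Sangle}) --- is correct and is exactly the reduction in the proof of Proposition~\ref{prop_s=m}. Where you diverge is in how the linear independence is obtained. Your primary route, deriving it from Theorem~\ref{thm_intro_s1}, has a gap that you flag but understate. Theorem~\ref{thm_intro_s1} asserts convergence toward a law on $(S^{n-1})^k$, a space that changes with $n$, so the portmanteau-style step (dependence is a closed null set of the limit, hence its probability must tend to $0$) does not apply in the usual form; one would need a uniform-in-$n$ quantitative comparison --- total-variation or moment control --- between the law of $(u_1,\dots,u_k)$ and the product of uniform measures, which the theorem as stated does not supply and which is not a ``soft'' upgrade. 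There is also a structural objection: in both \cite{Sangle} and the present paper the near-independence of short vectors is an \emph{input} to the Rogers-formula analysis that produces the angle statistics (see the explicit use of Proposition~\ref{prop_indep} in the proof of Theorem~\ref{thm_amain}), so deducing independence from the angle statement runs the logic backwards and risks circularity. Your secondary route --- bounding the $\mu_n$-expected number of linearly dependent short tuples by a Rogers/Schmidt mean-value estimate --- is the paper's actual method: Theorem~\ref{thm_lmain} and Proposition~\ref{prop_indep}, built on the sieve lemmas of Section~2, give $\mathrm{Prob}(L\cap S'\ \text{linearly dependent})<e^{-cn}$ for all $V\le Cn$. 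That route is self-contained, does not presuppose the angle result, and yields a much stronger conclusion ($k$ linear in $n$ with exponential decay) than Theorem~\ref{thm_intro_s2} asks for; it is the one you should carry out.
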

(In the latter two theorems cited above, $k$ being fixed relative to $n$ is morally equivalent to $V$ being fixed, because by Theorem \ref{thm_intro_rogers} there are on average $V/2$ vectors modulo signs inside $B(V)$, and the standard deviation is about the square root of the mean.)

It is natural to ask whether $L \cap B(V)$ is still ``random'' in the sense of the cited theorems as $V$ or $k$ grows with $n$. This question has been first investigated in Kim \cite{Kim}, where the author extended Theorem \ref{thm_intro_rogers} to the case $V = o(\sqrt{n})$.

In the present paper, we extend Theorems \ref{thm_intro_rogers} and \ref{thm_intro_s2} to the case $V, k < Cn$, where $C$ is a tiny absolute constant --- see Propositions \ref{prop_lpoisson} and \ref{prop_s=m}, respectively, for precise statements. We prove a few more statements in the same spirit also in the case $V, k < Cn$, e.g. Proposition \ref{prop_indep}. We generalize Theorem \ref{thm_intro_s1} as well, though unfortunately we practically have only $k = o(n/\log n)$:



\begin{Theorem} \label{thm_star}
Again continue with the notations of Theorem \ref{thm_intro_s1}. Consider the following two point processes on $\mathcal{R}^n := (\mathbb{R}^n \backslash \{0\})/\{\pm 1\} \cong \mathbb{R}_{>0} \times S^{n-1}/\{\pm 1\}$:
\begin{itemize}
\item Choose a random lattice $L \subseteq \mathbb{R}^n$, and the $i$-th arrival is given by $(\|x_i(L)\|, \pm u_i(L))$.
\item The $i$-th arrival is given by $((p_i/U)^{1/n}, \pm u)$, where $p_i$ is the $i$-th arrival of the Poisson process with intensity $\frac{1}{2}$, and $u$ is a uniform choice of a vector on $S^{n-1}$.
\end{itemize}

Each process imposes a measure on the space of all countable discrete subsets of $\mathcal{R}^n$, say $D(\mathcal{R}^n)$; let us call the measures (by abuse of language) $\mu_n$, and $\wp_n$, respectively. The pertinent $\sigma$-algebra on $D(\mathcal{R}^n)$ is generated by sets of the following form: for $d \in \mathbb{Z}_{>0}$, $0 \leq s_1 < t_1 \leq s_2 < \ldots \leq s_d < t_d$, $a_1, \ldots, a_d \in \mathbb{Z}_{\geq 0}$, $N:= a_1 + \ldots + a_d$, and $\alpha_1, \ldots, \alpha_N \subseteq S^{n-1}/\{\pm 1\}$, define
\begin{align*}
&\mathcal{B}(s_1, t_1, \ldots, s_d, t_d; a_1, \ldots, a_d; \alpha_1, \ldots, \alpha_N) \\
&:=\{S \in D(\mathcal{R}^n): |S \cap ((s_i/U)^{1/n}, (t_i/U)^{1/n}) \times S^{n-1}/\{\pm 1\}| = a_i, \frac{v_j}{\|v_j\|} \in \alpha_j, \forall i,j\},
\end{align*}
where $v_j$ above means the $j$-th vector in $S \cap \bigcup_i ((s_i/U)^{1/n}, (t_i/U)^{1/n}) \times S^{n-1}/\{\pm 1\}$, ordered first by increasing lengths and then by any (fixed) ordering on $S^{n-1}/\{\pm 1\}$; so if all vectors have different lengths, $v_j$ is the $j$-th shortest vector.

Now choose an $o_n(1)$ function $f(n)$. Take any sufficiently large $n$, and choose $d \leq n / \log n$. Choose $0 \leq s_1 < t_1 \leq s_2 < \ldots \leq s_d < t_d$ such that $t_i - s_i = f(n)$ for all $i$, $a_1, \ldots, a_d \in \mathbb{Z}_{\geq 0}$ such that $N = a_1 + \ldots + a_d < Cn$ where $C > 0$ is some absolute constant, and any $\alpha_1, \ldots, \alpha_N \subseteq S^{n-1}/\{\pm 1\}$. Then there exists an absolute constant $c > 0$ such that
\begin{align} \label{eq_star}
& \mu_n(\mathcal{B}(s_1, t_1, \ldots, s_d, t_d; a_1, \ldots, a_d; \alpha_1, \ldots, \alpha_N)) \\
&= \wp_n(\mathcal{B}(s_1, t_1, \ldots, s_d, t_d; a_1, \ldots, a_d; \alpha_1, \ldots, \alpha_N)) + O(e^{-cn}). \notag
\end{align}
\end{Theorem}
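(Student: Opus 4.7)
The strategy is to adapt the Schmidt/S\"odergren sieve argument already used for Proposition \ref{prop_lpoisson} (counts of $L \cap B(V)$ for $V < Cn$), now carrying along a directional indicator on each lattice vector. As a preliminary simplification, pass to the atoms $\beta_1, \ldots, \beta_M$ of the Boolean algebra generated by $\alpha_1, \ldots, \alpha_N$ on $S^{n-1}/\{\pm 1\}$. The event $\mathcal{B}$ becomes a finite disjoint union of events each specifying, for every shell $i$ and every atom $\beta_\ell$, the number of lattice pairs $\pm v$ with $(s_i/U)^{1/n} < \|v\| < (t_i/U)^{1/n}$ and $v/\|v\| \in \pm \beta_\ell$. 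Write $M_{i,\ell}(L)$ for this count:
\[
M_{i,\ell}(L) := \tfrac{1}{2}\bigl|\{v \in L \setminus \{0\}: (s_i/U)^{1/n} < \|v\| < (t_i/U)^{1/n},\ v/\|v\| \in \pm\beta_\ell\}\bigr|.
\]
It therefore suffices to match the joint distribution of the $(M_{i,\ell})$ under $\mu_n$ and $\wp_n$ with error $O(e^{-cn})$.

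The main tool is the Rogers/Schmidt $m$-th moment formula applied to the symmetric tensor
\[
F(v_1, \ldots, v_m) = \prod_{j=1}^m \mathbf{1}_{\mathrm{shell}_{i_j}}(v_j)\, \mathbf{1}_{\pm\beta_{\ell_j}}(v_j/\|v_j\|).
\]
Its main term factorizes as $\prod_j \bigl(\tfrac{1}{2}(t_{i_j} - s_{i_j}) \cdot \sigma(\beta_{\ell_j})/\sigma(S^{n-1}/\{\pm 1\})\bigr)$, which is exactly the corresponding factorial moment of the Poisson model $\wp_n$; the factorization is merely the product structure of the integrand. The remainder, coming from linearly dependent $m$-tuples, is controlled by the same sublattice sums used in the proof of Proposition \ref{prop_lpoisson}, and the extra directional indicator can only \emph{shrink} those bounds, since it restricts the region of integration. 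Inverting the joint factorial moments into probabilities of exact counts is then accomplished by Schmidt's inclusion-exclusion, truncated at order $\Theta(n)$ to secure the claimed exponential tail; the $O(e^{-cn})$ bound from Proposition \ref{prop_lpoisson} carries over because every directional weight is at most $1$.

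The main obstacle is ensuring that the exponential cancellation in $n$ — readily available when sieving inside a single ball — actually survives a joint $d$-fold inclusion-exclusion across several thin shells. Schmidt's sieve introduces combinatorial multiplicities that depend both on $d$ and on the total count $N$, roughly $d^N$ after assigning each vector to a shell and a directional atom; the constraint $d \leq n/\log n$ is calibrated so that $d^N \leq e^{N \log(n/\log n)}$ remains of subexponential form $e^{o(n)}$ when $N < Cn$, hence is absorbed by the $e^{-cn}$ gain from the moment estimates. Verifying that the implicit constants from S\"odergren's directional equidistribution remain uniform in $d$ and $N$ throughout this regime, and that the sieve truncation genuinely delivers the exponentially small remainder rather than a merely subexponential one, is the technically delicate part; by contrast, the matching of the Poisson main term is automatic from the product structure of the Rogers/Schmidt integrand.
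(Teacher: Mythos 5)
Your high-level scaffolding (Rogers moments, Schmidt sieve inversion, errors from dependent tuples absorbed by the exponential gain) is the right ingredient list, but there are two genuine gaps that would sink the argument as written.

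\textbf{The reduction to count statistics is incomplete.} You assert that, after passing to atoms $\beta_\ell$, it suffices to match the joint distribution of the counts $M_{i,\ell}$ under $\mu_n$ and $\wp_n$. This is false, because the event $\mathcal{B}$ imposes an \emph{ordering-dependent} directional constraint: it requires the $j$-th shortest vector $v_j$ to point into $\alpha_j$. The count vector $(M_{i,\ell})$ determines how many vectors in each shell land in each atom, but not which ordered position each one occupies. Already for $d=1$, $a_1=2$, and disjoint $\alpha_1,\alpha_2$, the count vector $(1,1,0)$ is consistent both with ``$v_1 \in \alpha_1$, $v_2 \in \alpha_2$'' (inside $\mathcal{B}$) and with the swapped configuration (outside $\mathcal{B}$). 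Asymptotic exchangeability of directions is exactly what is to be proved, so it cannot be assumed. The paper's route is to prove Theorem~\ref{thm_moment} first, where the sieve quantities $R^n_l$ of~\eqref{eq_R} carry the explicit ordering indicator $I(\|x_1\|<\cdots<\|x_N\|)$ throughout the computation and the directional content is packaged in the relative angles $\theta_{i,j}$ via $\alpha_T$; Theorem~\ref{thm_star} then follows by rotation invariance of $\mu_n$, which converts relative angles to the absolute directions $\alpha_j$. Your plan neither tracks the length ordering through the sieve nor invokes rotation invariance, so there is a missing mechanism to recover the event $\mathcal{B}$ from the quantities you compute.

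\textbf{The calibration argument for $d \le n/\log n$ is wrong.} You attribute the constraint to a combinatorial factor $\approx d^N$ and say $d^N \le e^{N\log(n/\log n)}$ is ``of subexponential form $e^{o(n)}$ when $N<Cn$.'' But for $N$ near $Cn$ one has $e^{N\log(n/\log n)} \approx e^{Cn\log n}$, which is \emph{super}exponential in $n$, so this bound cannot possibly be what the theorem rests on. The relevant combinatorial quantity is different. In the comparison between the truncated sieve sum and the full product of Poisson series, a term of total degree $N+j$ corresponds to distributing $j$ overflow indices among the $d$ shells, and there are roughly $\binom{j+d-1}{d-1}\approx j^{d-1}$ such terms; the constraint $d \lesssim n/\log n$ is calibrated so that for $j \gtrsim n$ the factor $j^{d-1}\approx e^{(d-1)\log j}\approx e^{O(n)}$ stays of linear-in-$n$ exponent size, and the Poisson-tail factor $\prod_i(V_i/2)^{\alpha_i}/\alpha_i! \lesssim \exp(j(1+\log(Cn/j)))$ (negative for $j>Cn$) then dominates. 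Your $d^N$ quantity and its claimed subexponential bound do not appear in, and are not compatible with, the actual estimate.
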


\begin{Remark}
It is worth noting that \eqref{eq_star} is meaningful (i.e. the error does not overwhelm the main term) for a wide range of the input variables. Assuming all $\alpha_i = S^{n-1}$, whose effects are easy to see anyway, we have

\begin{equation*}
\wp_n(\ldots) = \prod_{i=1}^d p(f(n),a_i) <\mathrm{exp}\left(d\log(f(n)^a + O(f(n)^{a+1}))\right),
\end{equation*}
where $p(\lambda, m)$ is the probability mass function of the Poisson distribution of mean $\lambda$ evaluated at $m$, and $a := \max a_i$. If $f(n)$ decays and $a$ increases sufficiently slowly, then $\wp_n(\ldots) \gg O(e^{-cn})$. For example, if $d = n/ \log n$, one can take $f(n) = 1/\log n$, and $a = c' \log n / \log\log n$ for some small constant $c' > 0$.
\end{Remark}

Theorem \ref{thm_star} presents a more or less complete picture of $L \cap B(V)$ in high dimensions for $V = d \cdot f(n) = o(n/\log n)$. We suspect that the same would hold for $V < 2n$, \emph{cf.}\ the discussion in Section 3.1 of Venkatesh \cite{V}. Unfortunately, in order for this improvement to happen, the error in \eqref{eq_star} must be strengthened to at least $e^{-O(n\log n)}$, which seems impossible to finesse from the technique that we employ here (see the discussions below, and the remark after Proposition \ref{prop_indep}).


Before we go on to present applications of Theorem \ref{thm_star} and other results, it is worth stopping to discuss the obstacles in investigating $L \cap B(V)$ for greater values of $V$ than what we were able to manage in this paper. There exist fundamental technical difficulties in studying $L \cap B(V)$ for $V > 2n$. For instance, the $n$-th and higher moments of $|L \cap B(V)|$ diverge; put another way, we cannot directly observe $n$ or more vectors of $L$ simultaneously. From a thematic point of view, perhaps not unrelated to the technical limitations, $V \approx 2n$ is a transition point between some sort of ``randomness and structure.'' If $V$ equals a ``large'' constant times $n$, say $2.01n$, the vectors in $L \cap B(V)$ must satisfy at least $e^{O(n)}$ linear relations on average. In contrast, if $V < 0.025n$, we prove $L \cap B(V)$ is linearly independent (modulo signs) for most $L$: see Proposition \ref{prop_indep}. Although it is not entirely clear that the plethora of relations imply $L \cap B(V)$ is not random --- because what if the relations have huge coefficients? see also the recent work of Str\"ombergsson and S\"odergren \cite{SS2} --- we see that the algebraic structure inherent in any lattice should play a role in shaping $L \cap B(V)$ as $V$ grows past $2n$. All the results of this paper, as well as the works of Rogers and Schmidt which inspired this paper, is based on the idea that the presence of linear dependencies within $L \cap B(V)$ is negligible when $V$ is small. In order to cross the randomness threshold at $V = (\mbox{some const}) \cdot n$, one needs to start to take seriously the influence of the lattice structure.

Technically, this desired breakthrough would amount to making the error term in \eqref{eq_star} even smaller, with possibly a different $\wp_n$, since by doing so one extends the ranges of the input variables such that the main term \eqref{eq_star} is significantly greater than the error. It would also lead to an improvement on the order of the lower bound on the optimal density of (lattice) sphere packings, or equivalently, on $\sup_{L \in X_n} \nu_1(L)$, which has not happened since Rogers \cite{RogersS} in 1947 despite the time and the variety of the methods taken to attack the problem --- see for instance \cite{Ball}, \cite{D-R}, \cite{KLV}, and \cite{V}. Consider the

\begin{Theorem}[Schmidt \cite{Schmidt1}, also \cite{Schmidt2}] \label{thm_intro_sch}
There exist constants $c, C > 0$ such that, for all $V < Cn$ and $n \geq 13$,
\begin{equation} \label{eq_intro_sch}
\mathrm{Prob}_{\mu_n}(|L\backslash\{0\} \cap B(V)| = 0) = p(V/2,0) + O(e^{-cn}).
\end{equation}
\end{Theorem}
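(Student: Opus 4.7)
The plan is to combine the method of factorial moments with the Rogers--Siegel integration formulas on $X_n$. Let $N(L) := \frac{1}{2}|(L \setminus \{0\}) \cap B(V)|$; formally
\[
\mathrm{Prob}_{\mu_n}(N = 0) \;=\; \sum_{k \geq 0} \frac{(-1)^k}{k!} M_k, \qquad M_k := \mathbb{E}_{\mu_n}\bigl[N(N-1) \cdots (N-k+1)\bigr],
\]
which parallels $p(V/2,0) = e^{-V/2} = \sum_{k\geq 0} (-1)^k (V/2)^k/k!$. Thus \eqref{eq_intro_sch} reduces to proving $M_k = (V/2)^k + \text{(small)}$ for $k$ in a useful range, together with adequate tail bounds.

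First, I would compute $M_k$ via a Rogers-type integration formula on $X_n$. Up to the factor $2^{-k}$, $M_k$ equals the integral of the sum of $\prod_i \mathbf{1}_{B(V)}(x_i)$ over $k$-tuples $(x_1,\ldots,x_k)$ of nonzero vectors of $L$, pairwise distinct modulo $\pm 1$. Rogers' formulas decompose this integral according to the primitive integer sublattice $\Lambda \subseteq \mathbb{Z}^k$ of linear relations satisfied by $(x_1,\ldots,x_k)$. The ``independent'' stratum ($\Lambda = 0$, which requires $k \leq n$) contributes exactly $(V/2)^k$ up to an innocuous correction tending rapidly to $1$.

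The main obstacle, and the core of Schmidt's argument, is to estimate the dependent strata. Each such stratum is indexed by a primitive sublattice $\Lambda \subseteq \mathbb{Z}^k$ of rank $r > 0$ and contributes an explicit integral of size $V^{k-r}/|\det\Lambda|^n$ times a combinatorial factor counting the relation types. Summing over $\Lambda$ requires Minkowski/Hermite-type lower bounds on $|\det\Lambda|$ together with efficient counting of sublattices of $\mathbb{Z}^k$ of each rank and covolume. The saving comes from the exponent $n$ in the penalty $1/|\det\Lambda|^n$, which beats the combinatorial growth in $k$ precisely because $V/n$ is small. Schmidt's estimates then show that, for $V < Cn$ with $C$ a sufficiently small absolute constant, the total dependent contribution is at most $(V/2)^k \cdot e^{-\Omega(n)}$ with implied constants \emph{uniform} in $k$; the hypothesis $n \geq 13$ is what lets these volume bounds bite. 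This uniformity in $k$ is the delicate point.

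Finally I would truncate the alternating series. Set $k_0 := \lfloor c_0 n \rfloor$ for a small $c_0 > 0$. For $k \leq k_0$ the step above yields $|M_k - (V/2)^k| \ll (V/2)^k e^{-c'n}$, so the partial sums of the two series agree up to $O(e^{-cn})$. For $k > k_0$, the Poisson tail $\sum_{k > k_0}(V/2)^k/k!$ is $O(e^{-cn})$ by Stirling once $C$ is small, and the random-lattice tail is controlled by a crude upper bound on $M_k$ coming from the same Rogers-type inequality (again leveraging the smallness of $V/n$). Adding the partial-sum estimate to the tail bounds then delivers \eqref{eq_intro_sch}.
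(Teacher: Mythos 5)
Your reduction to factorial moments and a truncated alternating series is a natural starting point, but the proposal hinges on an estimate that is not available and that Schmidt's sieve was specifically designed to circumvent. You assert that ``Schmidt's estimates then show that, for $V<Cn$ \dots the total dependent contribution is at most $(V/2)^k\cdot e^{-\Omega(n)}$ with implied constants uniform in $k$.'' This is not what Schmidt proves, and it is not known. The full error term in the Rogers integration formula \eqref{eq_Rogers}, summed over \emph{all} dependent strata (all primitive relation lattices $\Lambda$), is only known to be small when $k=O(\sqrt n)$; see Section 9 of Rogers \cite{Rogersmnt}. It appears genuinely hard to push Rogers's analysis further, and this is exactly the obstruction that confined the earlier work \cite{Kim} to sets of volume $O(\sqrt n)$. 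If one could control $M_k-(V/2)^k$ to the precision you need uniformly for $k$ up to $c_0 n$, the whole problem would already be solved and no sieve would be necessary. There is also a smaller structural issue: the identity $\mathrm{Prob}(N=0)=\sum_{k\ge0}(-1)^k M_k/k!$ is purely formal here because $M_k=\infty$ for $k\ge n$; one is forced from the outset into one-sided Bonferroni-type truncated bounds, and for the sandwich to close one again needs precisely the moment control above.

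Schmidt's actual argument \cite{Schmidt1}, reproduced in this paper as Lemmas~\ref{lemma_ssieve} and~\ref{lemma_alt} and applied in the proof of Theorem~\ref{thm_lmain} (take $k=1$ there to recover \eqref{eq_intro_sch}), is structured so that the alternating upper and lower bounds involve only the counts of \emph{corank $0$} and \emph{corank $\le 1$} $h$-tuples, never the full $h$-th factorial moment. For those restricted counts Lemma~\ref{lemma_integral} gives the clean bound
\begin{equation*}
\int_{X_n}\bigl(|A'_{S',h}|-|A_{S',h}|\bigr)\,d\mu_n \;\le\; \frac{(V/2)^{h-1}}{(h-1)!}\Bigl(3^h\bigl(\tfrac34\bigr)^{n/2}+5^h\bigl(\tfrac12\bigr)^n\Bigr),
\end{equation*}
which is exponentially small in $n$ uniformly for all $h<n$ --- and that is all the sieve demands. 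So the concrete gap in your write-up is the substitution of a blanket bound on the total dependent contribution to $M_k$ (which does not exist in the literature for $k$ of order $n$) where the proof actually requires Schmidt's alternating sieve together with the corank-$\le 1$ Rogers estimate.
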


Since $p(V/2,0) = e^{-V/2}$, Theorem \ref{thm_intro_sch} is meaningful when $\frac{V/2}{cn} <1 - \varepsilon$ for any fixed $0 < \varepsilon < 1$, yielding $\sup_{L \in X_n} \nu_1(L) \geq 2cn$ for all sufficiently large $n$, which is up to order of growth in $n$ the best known lower bound to this day. In a similar manner, further shrinking the error term in \eqref{eq_intro_sch} immediately leads to a better lower bound. On the other hand, our proof of Theorem \ref{thm_star} consists of bringing out the full power of the ideas of Schmidt that he used to prove Theorem \ref{thm_intro_sch}. Indeed, not only does \eqref{eq_intro_sch} directly follow from \eqref{eq_star}, but also their error terms come from the same obstacle --- namely the presence of linear dependence in $L \cap B(V)$. Thus the problem of understanding $L \cap B(V)$ for greater values of $V$ is closely related to the sphere packing problem.

\subsection*{A few applications}

Understanding the statistics of $L \cap B(V)$ is closely connected to many problems related to lattices. Below we demonstrate a few applications of our results. Since we wish to spotlight the shape of a random lattice as an intrinsic question, we do not attempt to prove them here --- except for the third one, which seems to garner some interest; see the appendix.

1. Using Theorem \ref{thm_star} we can prove quantitative versions of the various statistics about random Epstein zeta functions in high dimensions studied by S\"odergren (e.g. \cite{SEpstein}, \cite{SS}). In particular, we can estimate their speeds of convergence to the respective limit distributions as $n \rightarrow \infty$. It is unclear at this point what the optimal rate of convergence should be; a recent preprint by Str\"ombergsson and S\"odergren \cite{SS2} suggests a possibility that the ``Poisson-ness'' of $L \cap B(V)$ may hold beyond $V = O(n)$.


2. The Gaussian heuristic $\lambda_i(L) \approx \sqrt{n / 2 \pi e} \cdot i^{1/n}$ for all $1 \leq i \leq n$, where $\lambda_i$ means the $i$-th successive minimum, is an indispensible tool for the cryptanalysis of lattice-based cryptosystems. Using Propositions \ref{prop_s=m} and \ref{prop_lpoisson}, we can compute the distribution function of $\lambda_i$ for all $i < Cn$. Furthermore, Theorem \ref{thm_star} allows us to compute the joint distribution of the first $o(\frac{n}{\log{n}})$ minima.

3. On supporting the Berry-Tabor conjecture on high-dimensional flat tori: here we follow the formulation by Marklof \cite{Marklof}. For $L \in X_n$, define $\nu_i(L) := U \|x_i\|^n$, where $U$ here is the volume of the unit sphere, and $x_i$ is the $i$-th shortest nonzero vector (up to sign) of $L$. Define the \emph{consecutive level spacing distribution} by
\begin{equation*}
P(s,N,L) = \frac{1}{N}\sum_{j=1}^N \delta(s-\nu_{j+1}(L) + \nu_j(L)),
\end{equation*}
where $\delta$ is the Dirac delta as usual. The Berry-Tabor conjecture, in our context, states that
\begin{equation*}
\lim_{N\rightarrow\infty} \int_0^\infty P(s,N,L)h(s)ds = \int_0^\infty \frac{1}{2}e^{-\frac{1}{2}s}h(s)ds
\end{equation*}
for all ``generic'' $L$ and any ``nice enough'' test function $h$. The significance of $\frac{1}{2}e^{-\frac{1}{2}s}$ on the right-hand side is that it is the distribution function of the interarrival times of the Poisson process with intensity $\frac{1}{2}$. Thus the Berry-Tabor conjecture is some kind of a claim that the $\nu_j(L)$'s are locally Poissonian.

Define, for an annulus $A \subseteq \mathbb{R}^n$ of volume $V$ centered at origin,
\begin{equation*}
P(s,A,L) = \frac{1}{V/2}\sum_{j: x_j \in A} \delta(s-\nu_{j+1}(L) + \nu_j(L)).
\end{equation*}

Then we can show using Theorem \ref{thm_star} that, for any annulus $A$ of volume $O(n^\alpha)$ centered at origin, where $\alpha > 0$ is some absolute constant, and any smooth $h: \mathbb{R}_{>0} \rightarrow \mathbb{R}$ with compact support, in particular supported away from $0$,
\begin{equation} \label{eq_BT}
\int_{X_n} \left| \int_0^\infty P(s,A,L)h(s)ds - \int_0^\infty \frac{1}{2}e^{-\frac{1}{2}s}h(s)ds \right| d\mu_n = o_n(1),
\end{equation}
where the $o_n(1)$ term on the right is independent of $A$. By using the Minkowski inequality, we can add up and average \eqref{eq_BT} over arbitrarily many annuli, showing \eqref{eq_BT} for $A$ of any volume.

\eqref{eq_BT} may be interpreted as saying that the Berry-Tabor conjecture on $n$-dimensional flat tori holds up to $o_n(1)$ error on all but $o_n(1)$ percent of all tori.\footnote{Of course, $P(s,A,L)$ and $P(s,N,L)$ are not identical, even if $V = N$. However, a random lattice has $V(A)$ nonzero vectors in $A$ on average, and the standard deviation converges to 0 as $n \rightarrow \infty$. Hence at the cost of another harmless error term we could derive the corresponding statement for $P(s,N,L)$} Indeed, part of the statement of Theorem \ref{thm_star} conveys information about the local spacing statistics of the $\nu_j$'s.

\subsection*{A word about the proofs}

The main tool for studying a random lattice is the Rogers integration formula \cite{Rogersint}, which asserts that
\begin{equation} \label{eq_Rogers}
\int_{X_n} \sum_{x_1, \ldots, x_k \in L - \{0\}} \rho(x_1, \ldots, x_k) d\mu_n = \int_{\mathbb{R}^n} \ldots \int_{\mathbb{R}^n} \rho(x_1, \ldots, x_k) dx_1 \ldots dx_k + \mbox{(error)}
\end{equation}
for $k < n$ and $\rho: (\mathbb{R}^n)^k \rightarrow \mathbb{R}$ a bounded measurable function with compact support. If on the left we sum only over linearly independent $k$-tuples, the error term on the right vanishes.

In Kim \cite{Kim}, the author directly estimated, for a set $S \subseteq \mathbb{R}^n$, the quantity
\begin{equation} \label{eq_prob}
\mathrm{Prob}(|L \cap S| \geq m)
\end{equation}
by sums of expressions of the form \eqref{eq_Rogers} via an inclusion-exclusion argument. The problem is that, the error in \eqref{eq_Rogers} is known to vanish as $n \rightarrow \infty$ only when $k = O(\sqrt{n})$; see Section 9 of Rogers \cite{Rogersmnt} for the estimate. It seems difficult to improve upon Rogers's analysis, which was the reason that in Kim \cite{Kim} we could only look into a set of size $O(\sqrt{n})$.

Schmidt \cite{Schmidt1} presents an ingenious sieve idea which only requires him to estimate a special case of \eqref{eq_Rogers} where one sums over corank $0$ or $\leq 1$ $k$-tuples, in which case the error term can easily be shown to be small for all $k \leq n-1$. Schmidt applies it to estimate \eqref{eq_prob} in case $m = 1$, and obtains Theorem \ref{thm_intro_sch}. In Section 2, we prove a few lemmas that make up the sieve method of Schmidt. And in Section 3, in particular in the proof of Theorem \ref{thm_lmain}, we show that his method can be straightforwardly applied to all $m \leq Cn$, and discuss some of its consequences (Propositions \ref{prop_indep}, \ref{prop_s=m}, and \ref{prop_lpoisson}).

S\"odergren \cite{Sangle} introduces another sieve idea with which he calculates the joint distribution of the lengths and angles of random lattice vectors in a set of constant volume as dimension goes to infinity. It seems possible, by carefully following all his estimates, to verify that his argument in fact works for a set of volume $O(n^{\frac{1}{2} - \varepsilon})$ as well. However, his method also suffers from the difficulty of managing the error terms coming from \eqref{eq_Rogers}. In Section 4, in the proofs of Theorems \ref{thm_amain} and \ref{thm_moment}, we show how we can combine S\"odergren's and Schmidt's sieves together to overcome this issue. Our main result, Theorem \ref{thm_star}, follows as an immediate corollary of Theorem \ref{thm_moment}.

\subsection*{Acknowledgment}
The author would like to thank Junsoo Ha, Seokho Jin, and Akshay Venkatesh for helpful comments and discussions. He also thanks the referee for the careful reading and the numerous corrections of the original manuscript.

\section{Schmidt's sieve}

The following lemma was proved in Proposition 4 of Kim \cite{Kim}.
\begin{Lemma} \label{lemma_binomial}
If $M \geq k$,
\begin{equation*}
\sum_{h=k}^{\alpha} (-1)^{h-k}\binom{h-1}{k-1}\binom{M}{h} \begin{aligned} &\geq 1 \mbox{ if $\alpha - k$ is even,} \\ &\leq 1 \mbox{ if $\alpha - k$ is odd.} \end{aligned}
\end{equation*}
\end{Lemma}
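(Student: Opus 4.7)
The plan is to induct on $M$ with $k$ and $\alpha$ held fixed. Write $T_\alpha^{(M)}$ for the left-hand side of the claimed inequality. The key step is to compute $T_\alpha^{(M)} - T_\alpha^{(M-1)}$ in closed form and observe that its sign already matches the parity dichotomy demanded by the lemma.

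To carry out that computation, I would first apply Pascal's rule $\binom{M}{h} = \binom{M-1}{h} + \binom{M-1}{h-1}$ termwise, yielding
\begin{equation*}
T_\alpha^{(M)} - T_\alpha^{(M-1)} = \sum_{h=k}^{\alpha}(-1)^{h-k}\binom{h-1}{k-1}\binom{M-1}{h-1}.
\end{equation*}
Then the Vandermonde-style identity $\binom{h-1}{k-1}\binom{M-1}{h-1} = \binom{M-1}{k-1}\binom{M-k}{h-k}$ pulls the $M$-dependence out, and after the substitution $j = h-k$ the right-hand side becomes $\binom{M-1}{k-1}\sum_{j=0}^{\alpha-k}(-1)^j\binom{M-k}{j}$. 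Finally the classical partial-sum identity $\sum_{j=0}^r (-1)^j \binom{N}{j} = (-1)^r \binom{N-1}{r}$ collapses this to
\begin{equation*}
T_\alpha^{(M)} - T_\alpha^{(M-1)} = (-1)^{\alpha - k}\binom{M-1}{k-1}\binom{M-k-1}{\alpha-k}.
\end{equation*}
The right-hand side is $\geq 0$ when $\alpha - k$ is even, $\leq 0$ when $\alpha - k$ is odd, and simply $0$ whenever $\alpha \geq M$.

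The base case $M = k$ is immediate: all terms with $h > k$ vanish since $\binom{k}{h} = 0$, so $T_\alpha^{(k)} = \binom{k-1}{k-1}\binom{k}{k} = 1$ for every $\alpha \geq k$, satisfying both bounds with equality. For the inductive step with $M \geq k+1$, the hypothesis places $T_\alpha^{(M-1)}$ on the correct side of $1$, and the increment just computed pushes $T_\alpha^{(M)}$ further in the same direction (or leaves it unchanged), so the desired inequality persists. I foresee no real obstacle beyond the routine algebra. The one conceptual point worth flagging is the choice of induction variable: a naive induction on $\alpha$ fails because the single-step increment $T_\alpha^{(M)} - T_{\alpha-1}^{(M)} = (-1)^{\alpha-k}\binom{\alpha-1}{k-1}\binom{M}{\alpha}$ has the right sign but unbounded magnitude, so a ``good'' partial sum gives no control preventing the next term from overshooting $1$ in the wrong direction. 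Inducting on $M$ sidesteps this because the Pascal recursion produces precisely the increments whose signs align with the parity of $\alpha - k$.
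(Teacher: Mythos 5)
Your inductive argument is correct and, as far as I can verify, all the algebra checks out. Pascal's rule gives $T_\alpha^{(M)} - T_\alpha^{(M-1)} = \sum_{h=k}^{\alpha}(-1)^{h-k}\binom{h-1}{k-1}\binom{M-1}{h-1}$; the factorization $\binom{h-1}{k-1}\binom{M-1}{h-1} = \binom{M-1}{k-1}\binom{M-k}{h-k}$ is the subset-of-a-subset identity; and the collapse $\sum_{j=0}^{r}(-1)^j\binom{N}{j} = (-1)^r\binom{N-1}{r}$ is the standard alternating partial-sum identity (provable by a one-line Pascal telescope). The resulting increment $(-1)^{\alpha-k}\binom{M-1}{k-1}\binom{M-k-1}{\alpha-k}$ is well-defined throughout the induction, since $M \geq k+1$ guarantees $M-k-1 \geq 0$, and its sign is exactly governed by the parity of $\alpha-k$, which is what you need. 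The base case $M=k$ giving $T_\alpha^{(k)}=1$ identically is also right. Your closing remark about why inducting on $\alpha$ would fail is a genuinely useful observation.

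One caveat: the paper does not actually prove this lemma in place; it simply points to Proposition 4 of Kim \cite{Kim}. So there is no in-paper argument to compare against, and I can only certify that your proof is a correct self-contained one. The Pascal-plus-telescope route is a natural way to establish Bonferroni-type truncation inequalities of this shape, and it yields the sharper qualitative fact (which the lemma statement does not record) that the truncated sums are actually monotone in $M$ and stabilize at the value $1$ once $\alpha \geq M$.
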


The two lemmas below constitute Schmidt's sieve method. Lemma \ref{lemma_ssieve} is a straightforward generalization of Lemma 2 of \cite{Schmidt1}, and Lemma \ref{lemma_alt} is Lemma 15 of \cite{Schmidt2}.
\begin{Lemma} \label{lemma_ssieve}
Let $0 \leq \sigma_{k-1}, \sigma_k, \sigma_{k+1}, \ldots \leq 1$, and suppose $\sigma_{k+2t-1} \geq \sigma_{k+2t}, \sigma_{k+2t} \leq \sigma_{k+2t+1}$ for all $t \in \mathbb{Z}_{\geq 0}$. Suppose also that $\alpha - k$ is odd. Then
\begin{equation*}
\sum_{h=k}^{\alpha} (-1)^{h-k}\binom{h-1}{k-1}\binom{M}{h}\sigma_h \leq \sigma_{k-1}.
\end{equation*}
Analogously, suppose $0 \leq \tau_{k-1}, \tau_k, \tau_{k+1}, \ldots \leq 1$, with $\tau_{k+2t-1} \leq \tau_{k+2t}, \tau_{k+2t} \geq \tau_{k+2t+1}$ for all $t \in \mathbb{Z}_{\geq 0}$, and this time $\beta - k$ even. Then
\begin{equation*}
\sum_{h=k}^{\beta} (-1)^{h-k}\binom{h-1}{k-1}\binom{M}{h}\tau_h \geq \tau_{k-1}.
\end{equation*}
\end{Lemma}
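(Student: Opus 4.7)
The plan is a clean Abel (summation by parts) argument combined with Lemma \ref{lemma_binomial}. Write $a_h := (-1)^{h-k}\binom{h-1}{k-1}\binom{M}{h}$ and denote the partial sums $A_h := \sum_{j=k}^{h} a_j$, so that Lemma \ref{lemma_binomial} is exactly the statement that $A_h \geq 1$ when $h-k$ is even and $A_h \leq 1$ when $h-k$ is odd. Summation by parts gives
\[
\sum_{h=k}^{\alpha} a_h \sigma_h = A_\alpha \sigma_\alpha + \sum_{h=k}^{\alpha-1} A_h(\sigma_h - \sigma_{h+1}),
\]
and the entire proof reduces to bounding the right-hand side termwise.

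The key observation is that the zigzag hypothesis on $\sigma_h$ aligns the sign of each increment $\sigma_h - \sigma_{h+1}$ with the side of $1$ on which $A_h$ lies. For $h-k$ even one has $\sigma_h - \sigma_{h+1} \leq 0$ while $A_h \geq 1$; for $h-k$ odd one has $\sigma_h - \sigma_{h+1} \geq 0$ while $A_h \leq 1$. In both cases an elementary check yields
\[
A_h(\sigma_h - \sigma_{h+1}) \leq \sigma_h - \sigma_{h+1},
\]
since multiplying a non-positive quantity by a factor $\geq 1$ can only push it further negative, and multiplying a non-negative quantity by a factor $\leq 1$ can only shrink it. Telescoping then bounds the second sum by $\sigma_k - \sigma_\alpha$.

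For the boundary term, the parity assumption $\alpha - k$ odd gives $A_\alpha \leq 1$, and $\sigma_\alpha \geq 0$ then forces $A_\alpha \sigma_\alpha \leq \sigma_\alpha$. Combining these pieces and invoking the $t=0$ instance $\sigma_{k-1} \geq \sigma_k$ of the zigzag,
\[
\sum_{h=k}^{\alpha} a_h \sigma_h \leq \sigma_\alpha + (\sigma_k - \sigma_\alpha) = \sigma_k \leq \sigma_{k-1},
\]
as required. The lower bound for $\tau_h$ is obtained by running the same argument with every inequality reversed: the opposite zigzag makes $A_h(\tau_h - \tau_{h+1}) \geq \tau_h - \tau_{h+1}$ in each case, the parity $\beta - k$ even supplies $A_\beta \geq 1$ and hence $A_\beta \tau_\beta \geq \tau_\beta$, and $\tau_{k-1} \leq \tau_k$ closes the estimate.

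There is no substantive obstacle, only bookkeeping — the lemma is essentially an amplification of Lemma \ref{lemma_binomial} via Abel summation. The two points worth tracking are the nonnegativity of $\sigma_h$ and $\tau_h$, which is used specifically in handling the boundary terms $A_\alpha \sigma_\alpha$ and $A_\beta \tau_\beta$, and the fact that the parity conditions on $\alpha - k$ and $\beta - k$ are chosen precisely so that the sign at the right endpoint matches the sign of the telescoped contribution.
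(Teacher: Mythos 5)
Your proof is correct and is essentially the same as the paper's: both rest on Abel summation combined with Lemma \ref{lemma_binomial} controlling the partial sums $A_h$, with the zigzag hypothesis aligning the sign of each increment $\sigma_h-\sigma_{h+1}$ against the side of $1$ on which $A_h$ falls. The only difference is bookkeeping --- the paper extends the sum to $h=k-1$ with $B_{k-1}=-1$ and sets $\sigma_{\alpha+1}=0$, so that every summand is individually non-positive, whereas you keep the sum from $h=k$, bound the boundary term $A_\alpha\sigma_\alpha$ directly, and invoke $\sigma_k\le\sigma_{k-1}$ at the end --- but these are cosmetic variations of a single argument.
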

\begin{proof}
Define
\begin{equation*}
B_h = \begin{cases} (-1)^{h-k}\binom{h-1}{k-1}\binom{M}{h} &\mbox{if $h \geq k$} \\ -1 &\mbox{if $h = k-1$} \end{cases}
\end{equation*}
so that we can write
\begin{equation*}
-\sigma_{k-1} + \sum_{h=k}^{\alpha} (-1)^{h-k}\binom{h-1}{k-1}\binom{M}{h}\sigma_h = \sum_{h=k-1}^{\alpha}\sigma_hB_h.
\end{equation*}

By the summation by parts,
\begin{equation*}
\sum_{h=k-1}^{\alpha}\sigma_hB_h = \sum_{h=k-1}^{\alpha}\left(\sigma_h - \sigma_{h+1}\right)\left(\sum_{j=k-1}^h B_j\right) + \sigma_{\alpha+1}\sum_{j=k-1}^\alpha B_j.
\end{equation*}

Since $\alpha - k$ is odd, we can set $\sigma_{\alpha+1} = 0$. Now observe that, by Lemma \ref{lemma_binomial} and the assumptions on the $\sigma_i$'s, the summand in the first sum on the right-hand side is always non-positive. Thus
\begin{equation*}
\sum_{h=k-1}^{\alpha}\sigma_hB_h \leq 0,
\end{equation*}
which completes the proof of the first inequality.

For the other inequality, start from
\begin{equation*}
\sum_{h=k-1}^{\beta}\tau_hB_h = \sum_{h=k-1}^{\beta}\left(\tau_h - \tau_{h+1}\right)\left(\sum_{j=k-1}^h B_j\right) + \tau_{\beta+1}\sum_{j=k-1}^\beta B_j,
\end{equation*}
and observe that we can set $\tau_{\beta + 1} = 0$ and that the summand in the first sum on the right-hand side is always non-negative.
\end{proof}

\begin{Lemma} \label{lemma_alt}
Let $P$ be a finite set of cardinality $M$. For each $i = 1, \ldots, M$, let $A_i$ be a set consisting of (not necessarily all) subsets of $P$ of cardinality $i$, such that if $E \in A_i$ and $p \in E$, then $E - \{p\} \in A_{i-1}$. Let $A'_i$ be another set consisting of subsets of $P$ of cardinality $i$, such that if $F \in A_{i-1}$ and $p \in P - F$, then $F \cup \{p\} \in A'_{i}$.

Define $\{\sigma_i\}_{i=1}^M$ and $\{\tau_i\}_{i=1}^M$ by the conditions
\begin{align*}
\sigma_i\binom{M}{i} &= \begin{cases} |A_i| &\mbox{if $i$ is odd} \\ |A'_i| &\mbox{if $i$ is even} \end{cases} \\
\tau_i\binom{M}{i} &= \begin{cases} |A_i| &\mbox{if $i$ is even} \\ |A'_i| &\mbox{if $i$ is odd.} \end{cases}
\end{align*}

Then $\sigma_1 \leq \sigma_2 \geq \sigma_3 \leq \ldots$ and $\tau_1 \geq \tau_2 \leq \tau_3 \geq \ldots$.
\end{Lemma}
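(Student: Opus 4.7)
The plan is to deduce both alternating chains from one common double-counting argument, after first pinning down the auxiliary containment $A_i \subseteq A'_i$ for every $i$.

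First I would verify that containment. Given any $E \in A_i$, pick any $p \in E$; by the downward-closure property of the $A_\bullet$ family, $E - \{p\} \in A_{i-1}$, and applying the defining property of $A'_i$ to $F = E - \{p\}$ and the element $p \in P - F$ gives $E = F \cup \{p\} \in A'_i$. Hence $|A_i| \leq |A'_i|$. This fact is what makes the proof go through, because the natural output of the ``downward'' double count below lands in $A_\bullet$ rather than in $A'_\bullet$, yet $\sigma_i$ or $\tau_i$ may be calibrated to $A'_i$.

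Next I would reduce every individual inequality in both chains to a count of incidences $(S,p)$ with $S \subseteq P$ and $p$ either a member or a non-member of $S$. For an ``upward'' step (of the form $\sigma_i \leq \sigma_{i+1}$ with $i$ odd, or $\tau_i \leq \tau_{i+1}$ with $i$ even) I would count pairs $(F,p)$ with $F \in A_i$ and $p \in P - F$: the total is $|A_i|(M-i)$, each pair produces a set $F \cup \{p\} \in A'_{i+1}$ by the defining property of $A'_{i+1}$, and any fixed member of $A'_{i+1}$ is hit at most $i+1$ times. This yields $|A_i|(M-i) \leq (i+1)|A'_{i+1}|$, which in view of $\binom{M}{i+1}/\binom{M}{i} = (M-i)/(i+1)$ is exactly $|A_i|/\binom{M}{i} \leq |A'_{i+1}|/\binom{M}{i+1}$. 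For a ``downward'' step ($\sigma_i \geq \sigma_{i+1}$ with $i$ even, or $\tau_i \geq \tau_{i+1}$ with $i$ odd) I would count pairs $(E,p)$ with $E \in A_{i+1}$ and $p \in E$: the total is $(i+1)|A_{i+1}|$, each pair produces $E - \{p\} \in A_i \subseteq A'_i$ (this is where the containment is invoked), and any fixed set of size $i$ appears at most $M-i$ times, giving $(i+1)|A_{i+1}| \leq (M-i)|A'_i|$, i.e.\ $|A_{i+1}|/\binom{M}{i+1} \leq |A'_i|/\binom{M}{i}$. Stringing these together across parities yields both $\sigma_1 \leq \sigma_2 \geq \sigma_3 \leq \ldots$ and $\tau_1 \geq \tau_2 \leq \tau_3 \geq \ldots$.

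The only substantive content is Step 1; the rest is routine bookkeeping of two binomial coefficients. I do not expect any technical obstacle beyond being careful about parity when matching each $\sigma_i$ and $\tau_i$ with the correct family $A_i$ or $A'_i$.
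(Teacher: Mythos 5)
Your proof is correct and uses essentially the same double-counting argument as the paper: compare $a_i := |A_i|/\binom{M}{i}$ and $a'_i := |A'_i|/\binom{M}{i}$ by pairing each set with the one-element extensions or deletions that remain in the relevant family. One small but genuine improvement: you explicitly state and verify the containment $A_i \subseteq A'_i$, which the paper's proof uses silently when it asserts that for $E \in A_{i+1}$ the $i+1$ cardinality-$i$ subsets of $E$ lie in $A'_i$ (downward closure only places them in $A_i$ directly), so your version is the cleaner one.
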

\begin{proof}
Write $a_i\binom{M}{i} = |A_i|$ and $a'_i\binom{M}{i} = |A'_i|$. It suffices to show that $a_i \leq a'_{i+1}$ and $a'_i \geq a_{i+1}$ for any $i$.

For each $E \in A_i$, there are $M-i$ elements of $A'_{i+1}$ that contains $E$. And for each $F \in A'_{i+1}$, there are at most $i+1$ elements of $A_i$ contained in $F$. Therefore
\begin{equation*}
|A_i| = a_i\binom{M}{i} \leq \frac{i+1}{M-i} \cdot a'_{i+1}\binom{M}{i+1} = a'_{i+1}\binom{M}{i} \Rightarrow a_i \leq a'_{i+1}.
\end{equation*}

Similarly, for each $E \in A_{i+1}$ there are $i+1$ elements in $A'_i$ contained in $E$, and for each $F \in A'_i$ there at most $M-i$ elements in $A_{i+1}$ containing it. So
\begin{equation*}
|A'_i| = a'_i\binom{M}{i} \geq \frac{i+1}{M-i} \cdot a_{i+1}\binom{M}{i+1} = a_{i+1}\binom{M}{i} \Rightarrow a'_i \geq a_{i+1}.
\end{equation*}
\end{proof}

\section{Study of the lengths distribution} \label{sec_length}

Let $L \in X_n$, and let $S \subset \mathbb{R}^n$ be a centrally symmetric ($x \in S \Leftrightarrow -x \in S$) measurable set of volume $V$. Denote by $S'$ the set of all elements of $S$ whose first nonzero coordinate is positive; in particular $0 \not\in S'$. Write $|L \cap S'| = M$, and choose a positive integer $k < n$. Define $p_{S',k}$ so that
\begin{equation*}
p_{S',k}\binom{M}{k} = \left| \{\mbox{unordered, linearly independent $k$-tuples $(x_1, \ldots, x_k)$ in $L \cap S'$}\} \right|.
\end{equation*}

That is, $p_{S',k}$ is the \emph{proportion} of linearly independent $k$-tuples inside $L \cap S'$. Define
\begin{equation*}
P_{S',k}(L) := \begin{cases} p_{S',k} &\mbox{if\ } |L \cap S'| \geq k \\ 0 &\mbox{otherwise,} \end{cases}
\end{equation*}

Similarly, define
\begin{equation*}
P^0_{S',k}(L) := \begin{cases} 1 &\mbox{if $L \cap S'$ contains a linearly independent $k$-tuple} \\ 0 &\mbox{otherwise,} \end{cases}
\end{equation*}
and
\begin{equation*}
P^1_{S',k}(L) := \begin{cases} 1 &\mbox{if $L \cap S'$ contains a corank $\leq 1$ $k$-tuple} \\ 0 &\mbox{otherwise,} \end{cases}
\end{equation*}

The main theorem of this section is
\begin{Theorem} \label{thm_lmain}
Let $1 \leq k < n-1$. Suppose $k$ and $V$ satisfy $V + k \leq \frac{n}{4}\log{\frac{4}{3}}$; for example, $V + k \leq 0.05n$ will do. Then there exists a constant $c > 0$, independent of $n$, $k$ or $V$, such that, for all $n$ sufficiently large,
\begin{align} 
& (1-e^{-n})Q(V/2, k) - e^{-cn}Q(V/2, k-1) \label{eq_lmaineq} \\
& < \int_{X_n} P_{S',k}(L) d\mu_n \leq \int_{X_n} P^0_{S,k}(L) d\mu_n \leq \int_{X_n} P^1_{S',k}(L) d\mu_n \notag \\
& < (1+e^{-n})Q(V/2, k) + e^{-cn}Q(V/2,k-1), \notag
\end{align}
where $Q(N, k)$ denotes the right cumulative distribution function of the Poisson distribution with mean $N$ evaluated at $k$.
\end{Theorem}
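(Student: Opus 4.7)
My plan is to adapt Schmidt's sieve (Lemmas \ref{lemma_ssieve}, \ref{lemma_alt}) in tandem with Rogers's integration formula \eqref{eq_Rogers}, running in parallel for all three integrands the argument Schmidt used to prove Theorem \ref{thm_intro_sch}. The starting point is Lemma \ref{lemma_binomial}: writing $M := |L \cap S'|$,
\begin{equation*}
\mathbf{1}[M \geq k] = \sum_{h=k}^{M}(-1)^{h-k}\binom{h-1}{k-1}\binom{M}{h}.
\end{equation*}
I would weight $\binom{M}{h}$ by the proportion $\sigma_h(L)$ of $h$-tuples in $L \cap S'$ of a specified type---linearly independent at one parity of $h-k$, corank $\leq 1$ at the opposite parity---and truncate the resulting alternating sum at an index $\alpha$ chosen for parity. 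Lemma \ref{lemma_alt} delivers the required alternating monotonicity of the $\sigma_h(L)$, while Lemma \ref{lemma_ssieve} converts the truncated sum into a pointwise upper or lower bound for $P^{?}_{S',k}(L)$, with the residue $\sigma_{k-1}(L)$ identified with $P^{?}_{S',k-1}(L)$.

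Next I would integrate each term of the truncated sum against $\mu_n$ using Rogers's formula. For linearly independent $h$-tuples with $h < n$ this yields $(V/2)^h/h!$ exactly, and for corank $\leq 1$ $h$-tuples there is an additional rank-$(h-1)$ contribution which, following Schmidt, is bounded by $O(e^{-cn})$ times the main term via a sum over sublattice densities. Summed over $k \leq h \leq \alpha$, the main terms assemble into the truncation of the Poisson inclusion-exclusion identity for $Q(V/2,k)$. Under the hypothesis $V + k \leq \frac{n}{4}\log\frac{4}{3}$, the Poisson tail beyond $\alpha$ is $O(e^{-n}) Q(V/2,k)$ and the accumulated Rogers corrections are $O(e^{-cn}) Q(V/2, k-1)$; combining these with the integrated residue $\int \sigma_{k-1}(L) d\mu_n \approx Q(V/2, k-1)$ reproduces the error shape of \eqref{eq_lmaineq}. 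The two middle inequalities $\int P_{S',k} \leq \int P^0_{S',k} \leq \int P^1_{S',k}$ are immediate from the pointwise ordering of the integrands, while the outer bounds arise from running the sieve at opposite parities---using the $\sigma_h$ choice of Lemma \ref{lemma_alt} for one direction and the $\tau_h$ choice for the other.

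The principal obstacle is verifying that Rogers's formula restricted to corank $\leq 1$ $h$-tuples carries an error term uniformly bounded by $O(e^{-cn})$ against the main term $(V/2)^h/h!$ for every $h$ up to the truncation index $\alpha = \Theta(n)$. This is essentially Schmidt's key estimate behind his sphere-packing bound: the rank-$(h-1)$ contribution must be shown to decay geometrically in $n$, and balancing this decay against the factorial Poisson growth is what produces the explicit constant $\frac{1}{4}\log\frac{4}{3}$ in the hypothesis---equivalently, the working range $V + k \leq 0.05n$ stated in the theorem.
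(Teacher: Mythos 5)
Your overall architecture is the right one and matches the paper's: truncate the inclusion-exclusion of Lemma~\ref{lemma_binomial}, establish alternating monotonicity of the $h$-tuple proportions via Lemma~\ref{lemma_alt}, convert the truncated sum into pointwise bounds via Lemma~\ref{lemma_ssieve}, integrate term by term with Rogers's formula, and then balance the factorial decay of the Poisson tail against the exponential decay of the corank-$1$ correction. That said, two points in your sketch are off in a way that would derail a careful write-up.

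First, you identify the residue $\sigma_{k-1}(L)$ in Lemma~\ref{lemma_ssieve} with $P^{?}_{S',k-1}(L)$ and then speak of an ``integrated residue'' $\int \sigma_{k-1}\,d\mu_n \approx Q(V/2,k-1)$ contributing to the error. This is not how the lemma is deployed, and it would leave you bounding the wrong quantity. In the lemma the index $k-1$ is a bookkeeping convention; the value $\sigma_{k-1}$ is free, and the correct choice is to set $\sigma_{k-1} = \tau_{k-1} = p_{S',k}$, the proportion of independent $k$-tuples (and, for the $P^1$ version, the corank-$\leq 1$ analogue). The alternating monotonicity from Lemma~\ref{lemma_alt} gives $\sigma_k = p_{S',k}$ as well, so $\sigma_{k-1}\ge\sigma_k$ holds with equality and the lemma applies. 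Its conclusion then sandwiches $P_{S',k}(L)$ itself, pointwise, between the two truncated sums. There is no ``integrated residue'' term: the factor $Q(V/2,k-1)$ in the error of \eqref{eq_lmaineq} arises entirely from summing the corank-$\leq 1$ correction of Lemma~\ref{lemma_integral} over $h$, which telescopes into a constant times $(3/4)^{n/2} e^{2V} 3^{k-1} Q(V/2,k-1)$. Setting $\sigma_{k-1}=P_{S',k-1}$ would only yield a bound on the truncated sum by $P_{S',k-1}$, which does not control $P_{S',k}$ and so does not prove the theorem.

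Second, passing from $P_{S',k}$ to $P^0_{S',k}$ and $P^1_{S',k}$ requires a small additional observation you have not accounted for. The two outer expressions in the sandwich are integers, while $P_{S',k}\in[0,1]$; since $P^0_{S',k}$ is the indicator $\{P_{S',k}>0\}$, the same inequalities hold with $P^0_{S',k}$ in the middle by integer-valuedness of the outer sums. For $P^1_{S',k}$ one instead redefines $\sigma_{k-1}$ and $\tau_{k-1}$ to be the proportion of corank-$\leq 1$ $k$-tuples (equivalently $|A'_{S',k}|/\binom{M}{k}$), keeps the same $S_{S',h}$ and $T_{S',h}$, and checks that the monotonicity hypotheses of Lemma~\ref{lemma_ssieve} persist. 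Without these observations, ``running the argument in parallel for all three integrands'' does not by itself deliver the two middle inequalities of \eqref{eq_lmaineq} with matching error terms.
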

\begin{Remark}
 Note that in \eqref{eq_lmaineq}, if $V$ is not too small --- e.g. $V \geq e^{-dn}$ for some $d < c$ is sufficient --- then $Q(V/2, k)$ is the dominant term on both sides.
\end{Remark}
\begin{proof}
The middle inequalities in \eqref{eq_lmaineq} are obvious. We will first prove
\begin{align} 
& (1-e^{-n})Q(V/2, k) - e^{-cn}Q(V/2, k-1) \label{eq_Pbound} \\
&< \int_{X_n} P_{S',k}(L) d\mu_n < (1+e^{-n})Q(V/2, k) + e^{-cn}Q(V/2, k-1), \notag
\end{align}
and argue that the same argument applies to $P^0_{S',k}$ and $P^1_{S',k}$ with only a few tiny modifications.

For $L \in X_n$, define $A_{S',h}(L)$ and $A'_{S',h}(L)$ to be the set of unordered $h$-tuples $(x_1, \ldots, x_h)$ of distinct nonzero elements of $L \cap S'$ with corank 0 and $\leq 1$, respectively. For $0 \leq t < (n-k-1)/2$, define
\begin{equation*}
S_{S',h}(L) = \begin{cases} |A_{S',h}(L)| &\mbox{if $h=k+2t$} \\ |A'_{S',h}(L)| &\mbox{if $h=k+2t+1$,} \end{cases}
\end{equation*}
\begin{equation*}
T_{S',h}(L) = \begin{cases} |A'_{S',h}(L)| &\mbox{if $h=k+2t$} \\ |A_{S',h}(L)| &\mbox{if $h=k+2t+1$.} \end{cases}
\end{equation*}

It is easy to check that $A_{S',h}$ and $A'_{S',h}$ here satisfy the conditions of Lemma \ref{lemma_alt} (by shifting the indices by $k-1$). Therefore, upon defining both $\sigma_{k-1}$ and $\tau_{k-1}$ to equal $p_{S',k}$, Lemma \ref{lemma_ssieve} applies upon writing $S_{S',h}(L) = \sigma_h\binom{M}{h}$ and $T_{S',h}(L) = \tau_h\binom{M}{h}$. As a result, we have
\begin{equation} \label{eq_toestimate}
\sum_{h=k}^{\alpha} (-1)^{h-k}\binom{h-1}{k-1}S_{S',h}(L) \leq P_{S',k}(L) \leq \sum_{h=k}^{\beta} (-1)^{h-k}\binom{h-1}{k-1}T_{S',h}(L)
\end{equation}
for all $\alpha, \beta$ such that $\alpha - k$ is odd and $\beta - k$ is even. It remains to estimate the integral of each side of (\ref{eq_toestimate}). First we will need
\begin{Lemma} \label{lemma_integral}
Suppose $h < n$. Then
\begin{equation*}
\int_{X_n} |A_{S',h}(L)| d\mu_n = \frac{(V/2)^h}{h!},
\end{equation*}
and
\begin{equation*}
\int_{X_n} |A'_{S',h}(L)| - |A_{S',h}(L)| d\mu_n \leq \frac{(V/2)^{h-1}}{(h-1)!}\left( 3^h\left(\frac{3}{4}\right)^\frac{n}{2} + 5^h\left(\frac{1}{2}\right)^n\right).
\end{equation*}
\end{Lemma}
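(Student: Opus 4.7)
Both statements will follow from Rogers's integration formula \eqref{eq_Rogers}, exploiting the fact that restricting the summation on the left to \emph{linearly independent} tuples makes the error term vanish identically for all $h<n$.

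For the first equality, $h!\,|A_{S',h}(L)|$ counts ordered LI $h$-tuples of distinct nonzero vectors in $L\cap S'$. Applying Rogers's formula with $\rho=\mathbf{1}_{S'}^{\otimes h}$, and using $\mathrm{vol}(S')=V/2$ by central symmetry of $S$, gives
\begin{equation*}
\int_{X_n} h!\,|A_{S',h}(L)|\,d\mu_n \;=\; (V/2)^h,
\end{equation*}
which is the claim.

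For the second inequality, the structural observation is that a corank-exactly-$1$ unordered tuple $\{v_1,\ldots,v_h\}$ has \emph{every} $(h-1)$-subtuple linearly independent, because the unique rational relation $\sum c_i v_i=0$ must have all $c_i\neq 0$. Hence $h!(|A'_{S',h}(L)|-|A_{S',h}(L)|)$ equals the number of ordered tuples $(y_1,\ldots,y_h)\in(L\cap S')^h$ of distinct vectors of corank exactly $1$; and each such tuple is uniquely encoded by its LI $(h-1)$-prefix $(y_1,\ldots,y_{h-1})$ together with a primitive integer relation $(c_1,\ldots,c_{h-1};d)$ (all $c_i\neq 0$, $d\geq 1$, $\gcd(c_1,\ldots,c_{h-1},d)=1$) satisfying $y_h := d^{-1}\sum c_i y_i \in L\cap S'$ and $y_h\neq y_i$. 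Dropping the lattice-membership and distinctness constraints on $y_h$, then applying Rogers's formula to the outer sum over LI $(h-1)$-tuples (permitted since $h-1<n$), yields
\begin{equation*}
h!\int_{X_n}\!\bigl(|A'_{S',h}|-|A_{S',h}|\bigr)\,d\mu_n \;\leq\; \sum_{(c;d)\text{ primitive}} I(c,d), \quad I(c,d):=\int_{(\mathbb{R}^n)^{h-1}} \prod_{i=1}^{h-1}\mathbf{1}_{S'}(y_i)\cdot\mathbf{1}_{S'}\bigl(\tfrac{1}{d}\textstyle\sum c_i y_i\bigr)\,dy.
\end{equation*}

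To bound $I(c,d)$, set $M:=\max(|c_1|,\ldots,|c_{h-1}|,d)$ and treat two regimes. When $M=|c_j|$ for some $j$, substitute $y_j=c_j^{-1}(d y_h-\sum_{i\neq j}c_i y_i)$ (Jacobian $(d/|c_j|)^n\leq 1$) and drop the now-complicated indicator on $y_j$ to obtain $I(c,d)\leq (d/M)^n(V/2)^{h-1}$. In the complementary regime $M=d>\max_i|c_i|$, the point $d^{-1}\sum c_i y_i$ is forced close to the origin, and one must extract decay directly from the indicator $\mathbf{1}_{S'}$ at such a small vector. Summing over primitive $(c;d)$ by partitioning according to the pivot index ($h$ choices), the $2^{h-1}$ sign patterns of $(c_1,\ldots,c_{h-1})$, and the dyadic scale of $M$, one uses that the polynomial-in-$M$ count of primitive tuples is dominated by the exponential-in-$n$ factor $(d/M)^n$. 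After careful bookkeeping the two regimes contribute respectively the terms $3^h(3/4)^{n/2}$ and $5^h(1/2)^n$, as in Schmidt's estimate. The technical heart --- and the anticipated main obstacle --- is precisely this sharp combinatorial summation, in particular squeezing the $(1/2)^n$ decay out of the large-$d$ regime where the naive Jacobian exceeds $1$ and one must rely on the geometric smallness of $d^{-1}\sum c_i y_i$.
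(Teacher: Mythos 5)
The paper does not actually prove this lemma; its ``proof'' is a one-line citation to Schmidt \cite{Schmidt1}, so you are being asked to reconstruct Schmidt's argument. Your first computation is correct: applying Rogers's formula to the sum over ordered linearly independent $h$-tuples (for which the formula is exact when $h<n$) and using $\mathrm{vol}(S')=V/2$ gives $\int_{X_n} h!\,|A_{S',h}|\,d\mu_n=(V/2)^h$. The second part is where your argument breaks down.

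Your ``structural observation'' is false. A corank-exactly-$1$ set $\{v_1,\ldots,v_h\}$ need \emph{not} have all coefficients nonzero in its unique rational relation, and hence need not have every $(h-1)$-subtuple linearly independent. Counterexample: $\{e_1,\,2e_1,\,e_2\}$ has cardinality $3$ and rank $2$, so corank exactly $1$; its unique relation is $2v_1-v_2=0$, with $c_3=0$, and the subtuple $\{e_1,2e_1\}$ is dependent. (Such configurations genuinely occur for a lattice: $e_1$ and $2e_1$ can easily both lie in $L\cap S'$.) Consequently a corank-$1$ ordered tuple is \emph{not} uniquely encoded by an LI $(h-1)$-prefix together with a primitive relation with all $c_i\neq 0$; many corank-$1$ tuples are simply missed by your parameterization, and your displayed $\leq$ is not justified by ``dropping constraints.'' The correct bookkeeping is Rogers's decomposition of the full sum over $h$-tuples into ``types'' indexed by integer matrices in a normal form (equivalently, the full statement of Rogers's mean value theorem), which systematically handles vanishing coefficients and non-LI prefixes; Schmidt then bounds the corank-$1$ block of that decomposition. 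Beyond this, the concluding summation --- the step you yourself flag as the technical heart --- is only asserted to produce $3^h(3/4)^{n/2}+5^h(1/2)^n$, not carried out; since that is precisely the content of the lemma, the proof as written does not close. To repair it you would need to (i) replace the bijection by Rogers's type parameterization, allowing zero coefficients and arbitrary pivot positions, and (ii) actually execute the primitive-tuple sum with the two regimes $M=\max_i|c_i|$ and $M=d$, tracking the combinatorial factors that yield the $3^h$ and $5^h$.
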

\begin{proof}[Proof of Lemma \ref{lemma_integral}]
These are simple consequences of the Rogers integration formula \cite{Rogersint}. For proofs see Schmidt \cite{Schmidt1}.
\end{proof}

We will only prove the lower bound, since the upper bound is dealt with exactly the same way. By Lemma \ref{lemma_integral}, the left-hand side of \eqref{eq_toestimate} is bounded from below by

\begin{align*}
&\sum_{h=k}^{\alpha} (-1)^{h-k}\binom{h-1}{k-1}\frac{(V/2)^h}{h!} \\
&- \sum_{h=k}^{\alpha} \binom{h-1}{k-1}\frac{(V/2)^{h-1}}{(h-1)!}\left(3^h\left(\frac{3}{4}\right)^{\frac{n}{2}} + 5^h\left(\frac{1}{2}\right)^n\right).
\end{align*}

We may assume $\alpha = n-1$ or $n-2$, whichever one that makes $\alpha - k$ odd. We handle the first sum in the following lemma.

\begin{Lemma} \label{lemma_lmainterm}
Let $n, k, V, \alpha$ as above, so that $k, V \leq C\alpha$ for $C = 0.05$, say. Also write $\lambda = V/2$ for convenience. Then
\begin{equation}\label{dix1}
\left| \sum_{h=k}^{\alpha} (-1)^{h-k}\binom{h-1}{k-1}\frac{\lambda^h}{h!} - Q(\lambda,k) \right| < e^{-n}Q(\lambda,k).
\end{equation}
\end{Lemma}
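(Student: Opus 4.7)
The plan is to recognize the sum in \eqref{dix1} as a partial sum of the classical Bonferroni-type identity
\begin{equation*}
Q(\lambda, k) = \sum_{h=k}^\infty (-1)^{h-k}\binom{h-1}{k-1}\frac{\lambda^h}{h!},
\end{equation*}
so that the expression inside $|\cdot|$ in \eqref{dix1} is exactly the tail $E_\alpha := \sum_{h=\alpha+1}^\infty (-1)^{h-k}\binom{h-1}{k-1}\lambda^h/h!$. To establish the identity I would take expectations over $X \sim \mathrm{Poi}(\lambda)$ of the elementary combinatorial identity $\mathbf{1}_{\{X \geq k\}} = \sum_{h=k}^X (-1)^{h-k}\binom{h-1}{k-1}\binom{X}{h}$ (a one-line generating-function verification), together with the Poisson factorial moments $E[\binom{X}{h}] = \lambda^h/h!$.

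Next I would verify that from $h = \alpha$ onward the series is alternating with strictly decreasing absolute values. Setting $a_h := \binom{h-1}{k-1}\lambda^h/h!$,
\begin{equation*}
\frac{a_{h+1}}{a_h} = \frac{h\,\lambda}{(h+1)(h-k+1)} \leq \frac{\lambda}{h-k+1},
\end{equation*}
which falls below $1$ as soon as $h > k + \lambda - 1$. Under the hypothesis $V + k \leq (n/4)\log(4/3)$ we have $k + \lambda \leq V + k < 0.08\,n$, so monotonicity holds well before $\alpha \in \{n-1, n-2\}$. The Leibniz criterion then yields
\begin{equation*}
|E_\alpha| \leq a_{\alpha+1} = \binom{\alpha}{k-1}\frac{\lambda^{\alpha+1}}{(\alpha+1)!}.
\end{equation*}

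Finally, I would compare against the crude lower bound $Q(\lambda, k) \geq e^{-\lambda}\lambda^k/k!$ (retaining only the $h = k$ term of the Poisson sum). After simplification,
\begin{equation*}
\frac{|E_\alpha|}{Q(\lambda, k)} \leq \frac{k\, e^\lambda}{\alpha+1} \cdot \frac{\lambda^{m}}{m!}, \qquad m := \alpha+1-k,
\end{equation*}
and applying $m! \geq (m/e)^m$ gives $\lambda^m/m! \leq (e\lambda/m)^m$. The hypothesis ensures $m \geq 0.92\,n$ and $\lambda \leq (n/8)\log(4/3) \approx 0.036\,n$, so $e\lambda/m < 0.11 \ll 1/e$. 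Hence $(e\lambda/m)^m \leq e^{-Cn}$ with $C > 2$, which easily absorbs the $O(n\, e^{\lambda})$ prefactor and yields $|E_\alpha|/Q(\lambda,k) < e^{-n}$ for all large $n$. The argument contains no real surprises --- it is simply an alternating-series tail estimate --- and the hypothesis on $V+k$ is calibrated precisely to keep $e\lambda/m$ strictly below $1/e$, forcing decay faster than $e^{-n}$; the only care required is the bookkeeping of constants in this last step.
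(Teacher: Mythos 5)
Your proof is correct and follows essentially the same path as the paper's: express $Q(\lambda,k)$ as the alternating series $\sum_{h\geq k}(-1)^{h-k}\binom{h-1}{k-1}\lambda^h/h!$, bound the truncation error by a single tail term via eventual monotonicity of the magnitudes, compare against the crude lower bound $Q(\lambda,k)\geq e^{-\lambda}\lambda^k/k!$, and kill the ratio with a Stirling estimate. The only cosmetic differences are that you derive the series identity probabilistically (Poisson factorial moments applied to the Bonferroni identity) where the paper Taylor-expands $e^{-\lambda}$, and you make the Leibniz monotonicity check explicit where the paper simply asserts exponential decay of $q_h$ for $h\geq\alpha$.
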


\begin{proof}[Proof of Lemma \ref{lemma_lmainterm}]
It is a plain fact that
\begin{equation*}
Q(\lambda, k) = e^{-\lambda}\sum_{j=k}^\infty \frac{\lambda^j}{j!}.
\end{equation*}

By Taylor expanding $e^{-\lambda}$ and computing the coefficients of $\lambda$, we find that
\begin{equation*}
Q(\lambda,k) = \sum_{h = k}^\infty (-1)^{h-k}q_h
\end{equation*}
as well, where
\begin{equation*}
q_h := \frac{\lambda^h}{h!}\binom{h-1}{k-1} = \frac{\lambda^h}{h(h-k)!(k-1)!}.
\end{equation*}

It is easy to see that $q_h$ decays at least exponentially for $h \geq \alpha$. Hence the left-hand side of \eqref{dix1} is bounded by
\begin{equation} \label{dix4}
\left| \sum_{h = \alpha+1}^\infty (-1)^{h-k}q_h \right| < q_\alpha < \frac{\lambda^\alpha}{(\alpha-k)!k!}.
\end{equation}

On the other hand, $Q(\lambda, k) > e^{-\lambda} \lambda^k / k!$. Hence the ratio of the error \eqref{dix4} to $Q(\lambda, k)$ is at most
\begin{equation*}
\frac{e^\lambda \lambda^{\alpha - k}}{(\alpha-k)!} \leq \frac{e^{C\alpha}(C\alpha)^{(1-C)\alpha}}{((1-C)\alpha)!} < \frac{e^{\alpha + (1-C)\alpha\log C\alpha}}{e^{(1-C)\alpha\log{(1-C)\alpha}}} = e^{\alpha\left(1 + (1-C)\left(\log\frac{C}{1-C}\right)\right)}.
\end{equation*}
(We used Stirling's approximation in the second inequality above.) It is easily checked on a calculator that $1+(1-C)\left(\log\frac{C}{1-C}\right) < -1.7$ for $C = 0.05$, which is better than what we need.
\end{proof}

As for the second sum --- the error term --- it turns out to be no greater than
\begin{align*}
&\sum_{h=k}^{\alpha} \binom{h-1}{k-1}\frac{(V/2)^{h-1}}{(h-1)!} \cdot 6(3)^{h-1}\left(\frac{3}{4}\right)^\frac{n}{2}\\
&= \frac{6(3/4)^\frac{n}{2}}{(k-1)!}\sum_{h=k}^\alpha \frac{(3V/2)^{h-1}}{(h-k)!}\\
&\leq \frac{6(3/4)^\frac{n}{2}(3V/2)^{k-1}}{(k-1)!}e^{3V/2}\\
&= 6\left(\frac{3}{4}\right)^\frac{n}{2}e^{2V}3^{k-1} \cdot \frac{(V/2)^{k-1}}{(k-1)!}e^{-V/2}\\
&< 6\left(\frac{3}{4}\right)^\frac{n}{2}e^{2V}3^{k-1} \cdot Q(V/2,k-1).
\end{align*}

This tells us that, for $n$ sufficiently large, $V$ and $k$ in the range prescribed by Theorem \ref{thm_lmain}, the error term is exponentially small compared to $Q(V/2,k)$. This completes the proof of \eqref{eq_Pbound}.

Now simply notice that both sides of \eqref{eq_toestimate} are integers. Therefore, if we replace $P_{S',k}$ by $P^0_{S',k}$, \eqref{eq_toestimate} remains true, which proves \eqref{eq_Pbound} for $P^0_{S',k}$. Moreover, the conditions of Lemma \ref{lemma_ssieve} continue to hold if we simply redefine $\sigma_{k-1}$ and $\tau_{k-1}$ to equal $P^1_{S',k}$, which is defined by the equation $|A'_{S',k}| = P^1_{S',k}\binom{M}{k}$, and leave all $S_{S',h}$'s and $T_{S',h}$'s unchanged. So all the argument above goes through, and this shows \eqref{eq_Pbound} for $P^1_{S,k}$. This completes the proof of Theorem \ref{thm_lmain}.
\end{proof}

We next study some of the consequences of Theorem \ref{thm_lmain}. The first is that, roughly speaking, the first $Cn$ shortest vectors of a random lattice is almost always linearly independent.

\begin{Proposition} \label{prop_indep}
Let $S \subset \mathbb{R}^n$ be a centrally symmetric open set of volume $V$. Then there exist constants $c, C > 0$ such that whenever $n$ is sufficiently large and $V \leq Cn$,
\begin{equation} \label{eq_indep}
\mathrm{Prob}(\mbox{$ L \cap S' $ is linearly dependent}) < e^{-c n}.
\end{equation}
\end{Proposition}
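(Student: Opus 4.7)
The plan is to combine Theorem~\ref{thm_lmain} with the following exact identity. Writing $\mathrm{Dep}$ for the event $\{L \in X_n : L \cap S' \text{ is linearly dependent}\}$ and $r := \mathrm{rank}(L \cap S')$, we have
\begin{equation*}
\mathbf{1}_{\mathrm{Dep}}(L) \;=\; \sum_{k=2}^{n+1} \bigl(P^1_{S',k}(L) - P^0_{S',k}(L)\bigr) \qquad (\forall\, L \in X_n).
\end{equation*}
Indeed, if $L \cap S'$ is independent then $P^0_{S',k}(L) = P^1_{S',k}(L)$ for every $k$; if $L \cap S'$ is dependent then exactly one term contributes, namely $k = r + 1$, since appending any other element of $L \cap S'$ to an independent $r$-tuple yields a corank-$1$ $(r+1)$-tuple (so $P^1_{S',r+1}(L) = 1$) while no independent $(r+1)$-tuple exists (so $P^0_{S',r+1}(L) = 0$); for every other $k$, either $k \leq r$ (both indicators equal $1$) or $k \geq r + 2$ (both equal $0$, as rank $r$ rules out even a corank-$\leq 1$ $k$-tuple).

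Next, integrate against $\mu_n$ and split the sum at a threshold $K$ to be chosen. For the head $k \leq K - 1$, assuming $V + (K-1) \leq 0.05 n$, Theorem~\ref{thm_lmain} gives
\begin{equation*}
\int_{X_n} \bigl(P^1_{S',k} - P^0_{S',k}\bigr)\, d\mu_n \;\leq\; 2 e^{-n} Q(V/2, k) + 2 e^{-cn} Q(V/2, k - 1),
\end{equation*}
and summation uses $\sum_{k \geq 0} Q(V/2, k) = 1 + V/2$ to bound the total by $O\bigl((1 + V) e^{-cn}\bigr) = O(n e^{-cn})$. For the tail $k \geq K$, note that $\sum_{k \geq K}(P^1_{S',k} - P^0_{S',k})$ is at most $1$ and is nonzero only when $r \geq K - 1$, hence is pointwise dominated by $P^0_{S',K-1}(L)$; a second invocation of Theorem~\ref{thm_lmain} yields
\begin{equation*}
\int_{X_n} P^0_{S', K-1}\, d\mu_n \;\leq\; (1 + e^{-n}) Q(V/2, K - 1) + e^{-cn} Q(V/2, K - 2).
\end{equation*}

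It remains to choose $K$ so that both estimates decay exponentially. For $V \leq Cn$ with $C$ a sufficiently small absolute constant --- concretely $C(1 + e/2) < 0.05$ suffices --- take $K - 1 = \alpha n$ with $\alpha$ strictly between $eC/2$ and $0.05 - C$. Then both $V + (K-1) \leq 0.05 n$ and $eV/(2(K-1)) < 1$ hold, so the Chernoff-type estimate
\begin{equation*}
Q(V/2, K - 1) \;\leq\; \bigl(eV / (2(K - 1))\bigr)^{K-1} \;\leq\; (eC/(2\alpha))^{\alpha n}
\end{equation*}
is exponentially small, and combining with the head bound yields $\mathrm{Prob}(\mathrm{Dep}) \leq e^{-cn}$. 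The main obstacle is exactly this tension between the range of validity of Theorem~\ref{thm_lmain} ($V + k \leq 0.05 n$) and the a priori range $k \in [2, n+1]$ of the identity; routing the tail of the sum through the single indicator $P^0_{S', K-1}$ absorbs it into a boundary application of the theorem, at the cost of constraining $C$ to a small absolute constant and needing the Chernoff tail estimate on $Q(V/2, K-1)$.
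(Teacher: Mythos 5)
Your proof is correct and follows essentially the same route as the paper's: split the event at a threshold rank $K$ of size a small multiple of $n$, bound the low-rank dependent piece by $\sum_k \int (P^1_{S',k} - P^0_{S',k})\,d\mu_n$ via Theorem~\ref{thm_lmain}, and bound the high-rank piece by the Poisson tail $Q(V/2, K-1)$, again via Theorem~\ref{thm_lmain} plus a Chernoff/Stirling estimate. The only difference is cosmetic --- you formalize the paper's observation (that a dependent set of rank $r$ contributes exactly at $k = r+1$) into an exact pointwise identity, and you track the summed head error through $\sum_k Q(V/2,k) = 1 + V/2$ rather than the paper's cruder $O(k e^{-an})$ bound --- but the decomposition, the two invocations of Theorem~\ref{thm_lmain}, and the constant-budgeting are the same.
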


\begin{Remark}
This statement explains why Schmidt's sieve works well; the proportion of the dependent tuples are so negligible that Lemma \ref{lemma_ssieve} is really just an inclusion-exclusion principle plus a small error.

On the other hand, if $S = B(V)$, then \eqref{eq_indep} is bounded from \emph{below} by $Ve^{-0.7n}$ for $n$ sufficiently large. A quick sketch of proof: clearly one has
\begin{align*}
\mathrm{Prob}(\mbox{$ L \cap S' $ is linearly dependent}) &\geq \mathrm{Prob}(\mbox{$ L \cap S' $ is linearly dependent and has rank $1$}) \\
&= \int_{X_n} P^1_{S',2}(L) - P^0_{S',2}(L) d\mu_n,
\end{align*}
which equals, in the notation of Schmidt \cite{Schmidt1}, $R^1_2$ for $\rho =$ (the characteristic function of $S'$), which, by Theorem 3 of \cite{Schmidt1}, can easily be shown to be greater than $V/2^{n+1}$.

This indicates the limitation of Schmidt's method: we cannot explore with it a ball of volume greater than $1.4n$, as $Q(V/2, k)$ will typically have size about $e^{-0.7n}$ and thus will be clouded by the error term caused by linear dependence.
\end{Remark}

\begin{proof}
It does no harm to assume $V = Cn$, by enlarging $S$ if necessary, for some constant $C$ to be determined shortly. Also let $k = Cn$, and choose a $C$ so that $V$ and $k$ satisfy the assumptions of Theorem \ref{thm_lmain} e.g. $C = 0.025$ will work.

Clearly, we have
\begin{align*}
& \mathrm{Prob} (\mbox{$L \cap S'$ is linearly dependent}) \\
& \leq \mathrm{Prob}(\mbox{$L \cap S'$ is linearly dependent, and has rank $< k$}) \\
& + \mathrm{Prob}(\mbox{$L \cap S'$ has rank $\geq k$}).
\end{align*}

We will bound the right-hand side term by term. In case $L \cap S'$ is dependent and has rank $= r < k$, $L \cap S'$ has a corank 1 subset of cardinality $r+1$ but has no independent subset of the same cardinality. Therefore
\begin{align*}
& \mathrm{Prob}(\mbox{$L \cap S'$ is linearly dependent, and has rank $< k$}) \\
& \leq \sum_{h=2}^{k} \int_{X_n} \left(P^1_{S',h}(L) - P^0_{S',h}(L)\right) d\mu_n \leq 4ke^{-an},
\end{align*}
for some $a > 0$, by Theorem \ref{thm_lmain}.

Next, write $\lambda := V/2$. Again by Theorem \ref{thm_lmain}, and Stirling's approximation,
\begin{align*}
& \mathrm{Prob}(\mbox{$L \cap S'$ has rank $\geq k$}) \\
& = \mathrm{Prob}(\mbox{$L \cap S'$ contains an independent $k$-tuple}) \\
& < 2Q(\lambda,k) < 4e^{-\lambda}\frac{\lambda^k}{k!} < \frac{e^{-\lambda + k\log\lambda +k}}{e^{k\log k}}.
\end{align*}

Since $\lambda = k/2$, this equals $e^{-k/2 + k\log 1/2 +k} = e^{Cn(-1/2 + \log 1/2 + 1)} < e^{-0.19Cn}$. This completes the proof of the proposition.
\end{proof}

Next, we show that, for the vast majority of $L \in X_n$, the $k$-th shortest nonzero vector of $L$ has length $\lambda_k(L)$ for all $1 \leq k \leq (\mbox{const}) \cdot n$. Here, as usual, $\lambda_k(L) = \inf\{r: \mbox{vectors of $L$ of length at most $r$ span a subspace of dimension at least $k$}\}$ means the $k$-th successive minimum of $L$. From now on, we will sometimes abuse the word ``minimum'' to refer to a lattice vector of that length, which is unique up to signs for $\mu_n$-almost every $L$, as explained in the proof below.

\begin{Proposition} \label{prop_s=m}
Let $C$ and $c$ be as in Proposition \ref{prop_indep}. Then there exists $d > 0$ such that
\begin{equation} \label{eq_s=m}
\mathrm{Prob}(\mbox{$k$-th shortest vector = $k$-th minimum for all $1 \leq k \leq (C/4)n$}) \geq 1- e^{-dn}
\end{equation}
for all sufficiently large $n$.
\end{Proposition}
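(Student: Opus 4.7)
The plan is to reduce to Proposition \ref{prop_indep} via a purely deterministic observation: linear independence of the first $k$ shortest vectors (up to sign) forces them to realize the successive minima $\lambda_1, \ldots, \lambda_k$. First I would note that for $\mu_n$-almost every $L$, distinct nonzero lattice vectors up to sign have distinct Euclidean lengths --- for any fixed pair $v, w \in \mathbb{Z}^n$ with $v \neq \pm w$, the locus $\{g : \|vg\| = \|wg\|\} \subset \SL(n,\mathbb{R})$ is a proper analytic subset and so descends to a null set of $X_n$; outside the countable union of these null sets the sequence $x_1(L), x_2(L), \ldots$ is unambiguous.

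The deterministic key step I would prove is: if $x_1(L), \ldots, x_k(L)$ are linearly independent then $\|x_j(L)\| = \lambda_j(L)$ for every $1 \leq j \leq k$. Indeed, $\lambda_j \leq \|x_j\|$ is trivial because $x_1, \ldots, x_j$ are $j$ linearly independent vectors of length at most $\|x_j\|$. Conversely, by the strict ordering of lengths, every nonzero $v \in L$ with $\|v\| < \|x_j\|$ is one of $\pm x_1, \ldots, \pm x_{j-1}$, and these together span a subspace of dimension at most $j-1$; hence no $j$ linearly independent vectors of length $< \|x_j\|$ exist, forcing $\lambda_j \geq \|x_j\|$.

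It now suffices to show that the event ``$x_1(L), \ldots, x_k(L)$ are linearly independent'' holds with probability $\geq 1 - e^{-dn}$, for $k := \lfloor Cn/4 \rfloor$. Setting $V := Cn$ and $S := B(V)$, one verifies $V + k \leq (5/4)Cn \leq 0.05 n$, so Theorem \ref{thm_lmain} applies. Combining its lower bound on $\int P^0_{S',k}\, d\mu_n$ with the trivial inequality $\mathrm{Prob}(|L \cap B(V)'| \geq k) \geq \int P^0_{S',k}\, d\mu_n$, and then invoking a standard Chernoff estimate for the Poisson distribution (e.g. $\mathrm{Prob}(\mathrm{Poisson}(\lambda) \leq \lambda/2) \leq e^{-\lambda/8}$ applied at $\lambda = Cn/2$, so that $Q(Cn/2, Cn/4) \geq 1 - e^{-Cn/16}$), I get
\begin{equation*}
\mathrm{Prob}(|L \cap B(V)'| \geq k) \geq 1 - e^{-c_1 n}
\end{equation*}
for some $c_1 > 0$. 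Meanwhile Proposition \ref{prop_indep} directly gives $\mathrm{Prob}(L \cap B(V)' \text{ linearly dependent}) \leq e^{-cn}$. On the intersection of these two favorable events, $L \cap B(V)'$ has at least $k$ elements and is linearly independent, so its subset $\{x_1(L), \ldots, x_k(L)\}$ is linearly independent as well; combining with the deterministic claim finishes the proof with any positive $d < \min(c, c_1)$.

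I do not anticipate a genuine obstacle here, since both probabilistic inputs are already in hand. The only bookkeeping point is ensuring the same absolute constant $C = 0.025$ is used across Theorem \ref{thm_lmain} and Proposition \ref{prop_indep} so that they apply simultaneously; the choice $k = V/4$ then automatically leaves a generous Chernoff margin, making the exponent $d$ uniform in $n$.
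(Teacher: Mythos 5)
Your proof is correct and takes essentially the same route as the paper: set $V = Cn$, use Theorem~\ref{thm_lmain} to show $|L \cap B(V)'|\geq k$ with high probability, combine with Proposition~\ref{prop_indep} to get linear independence, and conclude from the (a.s. unambiguous) ordering. The only cosmetic differences are that the paper bounds the Poisson lower tail by a short direct computation rather than quoting a Chernoff bound, cites Lemma 5.1 of S\"odergren for the almost-sure distinctness of lengths instead of arguing it via null sets, and leaves implicit the deterministic step (``$x_1,\ldots,x_k$ independent $\Rightarrow \|x_j\| = \lambda_j$ for $j\leq k$'') that you usefully spell out.
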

\begin{proof}
Set $S = B(V)$ where $V = Cn$. We claim that, for some $c' > 0$,
\begin{equation*}
\mathrm{Prob}(|L \cap S'| \geq V/4) \geq 1-e^{-c'n}.
\end{equation*}

Suppose for the moment this is true. By Proposition \ref{prop_indep}, there exists $c > 0$ such that
\begin{equation*}
\mathrm{Prob}(\mbox{$L \cap S'$ is independent with cardinality $\geq V/4$}) \geq 1 - e^{-cn} - e^{-c'n},
\end{equation*}
which proves \eqref{eq_s=m}, because by Lemma 5.1 of S\"odergren \cite{Sangle}, all vectors (modulo $\pm$) of a random lattice have distinct lengths with probability 1.

Hence it remains to prove the claim. By Theorem \ref{thm_lmain}, by adjusting $c$ if necessary,
\begin{align*}
\mathrm{Prob}(|L \cap S'| \leq V/4) &\leq e^{-V/2}\sum_{h \leq \lfloor V/4 \rfloor} \frac{(V/2)^h}{h!} + 2e^{-cn} \\
&< e^{-V/2}\left(\lfloor \frac{V}{4} \rfloor + 1 \right) \cdot \frac{(V/2)^{\lfloor V/4 \rfloor}}{\lfloor V/4 \rfloor!} + 2e^{-cn},
\end{align*}
which, by a similar computation as in the last part of the proof of Lemma \ref{lemma_lmainterm}, can be shown to be less then $e^{-c''n}$ for some $c'' > 0$, as desired.
\end{proof}


Thanks to the results of this section so far, we can now prove the following improvement to the author's previous result \cite{Kim}.

\begin{Proposition} \label{prop_lpoisson}
Let $S \subseteq \mathbb{R}^n$ be a centrally symmetric open set of volume $V$. There exist constants $C, c > 0$ such that, if $n$ is sufficiently large, and if $V, k \leq Cn$, then
\begin{equation*}
Q(V/2, k) - e^{-cn} < \mathrm{Prob}(|L \cap S'| \geq k) < Q(V/2, k) + e^{-cn}.
\end{equation*}

These bounds carry substance if and only if $Q(V/2, k) \gg e^{-cn}$. This is the case, for example, if there exists a constant $D > 0$ such that $e^{-Dn} \gg e^{-cn}$, $V < Dn$, and $V > 2e^{-D/C}k$.
\end{Proposition}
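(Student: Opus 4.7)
The plan is to reduce Proposition \ref{prop_lpoisson} directly to Theorem \ref{thm_lmain} and Proposition \ref{prop_indep}. The point is to bridge between the event $\{|L \cap S'| \geq k\}$ and the function $P^0_{S',k}(L)$, which counts whether $L \cap S'$ contains a linearly independent $k$-tuple. The key observation is the pointwise sandwich
\begin{equation*}
P^0_{S',k}(L) \leq \mathbf{1}[|L \cap S'| \geq k] \leq P^0_{S',k}(L) + \mathbf{1}[L \cap S' \text{ is linearly dependent}],
\end{equation*}
which holds because on the event that $L \cap S'$ is linearly independent, the condition $|L \cap S'| \geq k$ is equivalent to $L \cap S'$ containing an independent $k$-tuple, i.e.\ to $P^0_{S',k}(L) = 1$; on the dependent event the indicator is bounded by $1$.

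Now I would choose $C > 0$ small enough that both Theorem \ref{thm_lmain} and Proposition \ref{prop_indep} apply whenever $V, k \leq Cn$ (any $C$ strictly smaller than the minimum of the constants appearing in those results will do; for instance $C < 0.025$ accommodates the hypothesis $V + k \leq 0.05n$ of Theorem \ref{thm_lmain}). Integrating the sandwich over $X_n$ and invoking Proposition \ref{prop_indep} to bound the probability of linear dependence by $e^{-c_1 n}$ yields
\begin{equation*}
\int_{X_n} P^0_{S',k}(L) \, d\mu_n \leq \mathrm{Prob}(|L \cap S'| \geq k) \leq \int_{X_n} P^0_{S',k}(L) \, d\mu_n + e^{-c_1 n}.
\end{equation*}
Theorem \ref{thm_lmain} takes care of the middle integral: since $Q(V/2,k)$ and $Q(V/2,k-1)$ are both bounded by $1$, the theorem gives $\int_{X_n} P^0_{S',k} \, d\mu_n = Q(V/2,k) + O(e^{-c_2 n})$. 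Combining the two estimates produces $\mathrm{Prob}(|L \cap S'| \geq k) = Q(V/2,k) + O(e^{-cn})$ for any $c$ slightly smaller than $\min(c_1, c_2)$, which is the main claim.

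For the substance assertion, one needs $Q(V/2,k) \gg e^{-cn}$. The crude lower bound $Q(V/2,k) \geq e^{-V/2}(V/2)^k/k!$ together with Stirling's approximation gives $\log Q(V/2,k) \gtrsim -V/2 + k \log\bigl(V/(2k)\bigr)$. Under the hypotheses $V < Dn$ (with $D < c$) and $V > 2e^{-D/C} k$, the ratio $V/(2k) > e^{-D/C}$ forces $k \log\bigl(V/(2k)\bigr) > -kD/C \geq -Dn$, while $-V/2 > -Dn/2$; after mild adjustment of $D$ the total exceeds $-cn$, so the main term dominates.

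The main obstacle would ordinarily be controlling the contribution of linearly dependent $k$-tuples, but Proposition \ref{prop_indep} supplies exactly the exponential decay required, and Theorem \ref{thm_lmain} provides the Poisson main term. With both inputs in hand, the present proposition is essentially a bookkeeping corollary, and I do not anticipate any further technical ingredient.
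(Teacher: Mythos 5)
Your proof is correct and takes essentially the same route as the paper: both decompose the event $\{|L \cap S'| \geq k\}$ according to whether $L \cap S'$ is linearly independent, invoking Proposition~\ref{prop_indep} to discard the dependent case and Theorem~\ref{thm_lmain} for the Poisson main term. Your pointwise sandwich $P^0_{S',k} \leq \mathbf{1}[|L\cap S'|\geq k] \leq P^0_{S',k} + \mathbf{1}[\text{dependent}]$ is a slightly crisper way of writing the paper's decomposition, but the substance is identical.
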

\begin{proof}
Observe that
\begin{align*}
&\mathrm{Prob}(|L \cap S'| \geq k) \\
&= \mathrm{Prob}(\mbox{$|L \cap S'| \geq k$ and independent}) + \mathrm{Prob}(\mbox{$|L \cap S'| \geq k$ and dependent}).
\end{align*}

Theorem \ref{thm_lmain} implies that there exist constants $C, c > 0$ such that the first term equals $Q(V/2,k)$ plus an error of size at most $e^{-cn}$. The second term is bounded between $0$ and $e^{-cn}$ --- adjusting $c$ if necessary --- by Proposition \ref{prop_indep}. This completes the proof of the inequality.

The conditions in the last statement of the proposition are made up simply to ensure that $Q(V/2,k) > e^{-Dn} \gg e^{-cn}$, which can be verified by direct computation.
\end{proof}

At the end of the next section, we will compute the joint distribution of $|L \cap S_i'|$ for many choices of annuli $S_i$ centered at origin, which leads to a quantitative version of S\"odergren's theorem \cite{Sodergren} that the counting process $\{|L \cap B'(V)|, V > 0\}$ converges weakly to a Poisson process as $n \rightarrow \infty$. Furthermore, we also show such distribution is almost independent of the distribution of the mutual angles of the vectors in $L \cap S'$, which corresponds to a quantitative version of another similar theorem of S\"odergren \cite{Sangle}, namely Theorem \ref{thm_star} in the introduction.

\section{Study of the angles and the joint distributions}

We start this section by rewriting
\begin{equation} \label{eq_preGS}
\int_{\mathbb{R}^n} \ldots \int_{\mathbb{R}^n} f(x_1, \ldots, x_N) dx_1 \ldots dx_N
\end{equation}
in terms of a sort of polar coordinates that will be useful to us later.

Choose any $N < n$, and let $x_1, \ldots, x_N \in \mathbb{R}^n$ be arbitrary. For each $i = 1, \ldots, N$, denote by $x^*_i$ the component of $x_i$ orthogonal to $\mathrm{span}(x_1, \ldots, x_{i-1})$, and define $x^{**}_i := x_i - x^*_i$. Then
\begin{align*}
dx^*_i &= \mbox{Lebesgue measure on $\mathrm{span}(x^*_1, \ldots, x^*_{i-1})^\perp$}, \\
dx^{**}_i &= \mbox{Lebesgue measure on $\mathrm{span}(x^*_1, \ldots, x^*_{i-1})$}.
\end{align*}

Clearly $dx_i = dx^*_idx^{**}_i$. We can further rewrite
\begin{align*}
dx^*_i &= \tilde{r}_i^{n-i} d\tilde{r}_i du_i, \\
dx^{**}_i &= \prod_{j=1}^{i-1} d\mu_{i,j},
\end{align*}
where $\tilde{r}_i = \|x^*_i\|$ and $u_i = x^*_i / \|x^*_i\|$ (thus $du_i$ is the Lebesgue measure on the unit sphere $S^{n-i} \subseteq \mathbb{R}^{n-i+1}$), and $\mu_{i,j} = \langle x_i, x^*_j \rangle / \|x^*_j\|$.

With respect to these coordinates, \eqref{eq_preGS} equals
\begin{equation*}
\int \ldots \int f(x_1, \ldots, x_N) \prod_{i=1}^N\left(\tilde{r}^{n-i}_i d\tilde{r}_i du_i \prod_{j=1}^{i-1} d\mu_{i,j}\right).
\end{equation*}

Next, let $r_i : = \|x_i\|$ and $\theta_{i,j}$ be the angle formed by $x_i$ and $x^*_j$. Then
\begin{align*}
\tilde{r}_i^2 &= r^2_i - \sum_{j = 1}^{i-1} \mu^2_{i,j}, \\
\mu_{i,j} &= r_i\sin\theta_{i,1} \ldots \sin\theta_{i,j-1} \cos\theta_{i,j},
\end{align*}
and one computes that \eqref{eq_preGS} equals (\emph{cf}. (4.4) of \cite{Sangle})
\begin{equation} \label{eq_postGS}
\int \ldots \int f(x_1, \ldots, x_N) \prod_{i=1}^N\left(r^{n-1}_i dr_i du_i \prod_{j=1}^{i-1} \sin^{n-j+1}\theta_{i,j}d\theta_{i,j}\right)
\end{equation}
where one integrates the $r$'s over $\mathbb{R}_{\geq 0}$, $u_i$'s over $S^{n-i}$, and $\theta$'s over the interval $[0,\pi]$.

In order to prove our main theorem below, we need to evaluate \eqref{eq_postGS} at functions of a certain form. We will carry this out step by step. Start by supposing that $f: (\mathbb{R}^n)^N \rightarrow \mathbb{R}$ is Borel measurable, bounded, and has a compact support, and that $f$ is of the form
\begin{equation*}
f(x_1, \ldots, x_N) = \rho(r_1, \ldots, r_N) \alpha(\{\theta_{i,j}\}_{1 \leq j < i \leq N})
\end{equation*}
for some $\rho: \mathbb{R}_{\geq 0}^N \rightarrow \mathbb{R}$ and $\alpha: [0,\pi]^{\binom{N}{2}} \rightarrow \mathbb{R}$. Then \eqref{eq_postGS} equals
\begin{align*}
& \prod_{i=1}^{N} S_{n-i} \cdot \int_{r_i \in [0,\infty)} \rho(r_1, \ldots, r_N) \prod_{i=1}^N r^{n-1}_i dr_i \\
& \cdot \int_{\theta_{i,j} \in [0,\pi]} \alpha(\{\theta_{i,j}\}_{1 \leq j < i \leq N}) \prod_{1 \leq j < i \leq N} \sin^{n-j+1} \theta_{i,j} d\theta_{i,j}.
\end{align*}
Here we denoted by $S_{n-i}$ the surface area of $S^{n-i}$. Suppose in addition that
\begin{equation*}
\rho(r_1, \ldots, r_N) = \prod_{i=1}^N I(Ur_i^n \in [s, t])
\end{equation*}
for $s, t \in \mathbb{R}_{\geq 0}$, where $U$ denotes the volume of the unit ball, and $I(\ldots)$ is defined to be equal to $1$ if the condition inside the parenthesis is satisfied and $0$ otherwise. Also write $V = t - s$ for the volume of $B(s,t) := B(t) \backslash B(s)$. Then one can easily find
\begin{align*}
&\int_{\mathbb{R}^n} \ldots \int_{\mathbb{R}^n} f(x_1, \ldots, x_N) dx_1 \ldots dx_N \\
&= V^N \cdot \left( \prod_{i=1}^{N} \frac{S_{n-i}}{S_{n-1}} \int \alpha(\{\theta_{i,j}\}_{1 \leq j < i \leq N}) \prod_{1 \leq j < i \leq N} \sin^{n-j+1} \theta_{i,j} d\theta_{i,j} \right).
\end{align*}

For convenience we will refer to the expression in the parenthesis by $A(\alpha)$, so that
\begin{equation} \label{eq_asep}
\int_{\mathbb{R}^n} \ldots \int_{\mathbb{R}^n} f(x_1, \ldots, x_N) dx_1 \ldots dx_N = A(\alpha)V^N.
\end{equation}

Suppose $\alpha= \alpha_T$ is a characteristic function of a set $T \in [0,\pi]^{\binom{N}{2}}$. We will slightly abuse our language and write $A(\alpha_T) = A(T)$. From our calculations above, it is clear that $A(T)$ equals the probability that $N$ points randomly uniformly chosen from $S^{n-1}$ satisfy the angles relations dictated by $T$.

We still wish to further restrict $f$ so as to account for the fact that all lattices are centrally symmetric. To this end, let us say $T \subseteq [0,\pi]^{\binom{N}{2}}$ is \emph{centrally symmetric} if, for all $i$,
\begin{align*}
& (\ldots, \theta_{i,1}, \ldots, \theta_{i,i-1}, \ldots, \theta_{i+1,i}, \ldots, \theta_{i+2,i}, \ldots) \in T \\ 
&\Leftrightarrow (\ldots, \pi-\theta_{i,1}, \ldots, \pi-\theta_{i,i-1}, \ldots, \pi-\theta_{i+1,i}, \ldots, \pi-\theta_{i+2,i}, \ldots) \in T
\end{align*}
(i.e. change every $\theta$ that has $i$ in the index). For $f$ defined as above, this is equivalent to saying that $f(\ldots, x_i, \ldots) = f(\ldots, -x_i, \ldots)$ for all $i$.

We are now ready to prove the main theorems of this paper. In Theorem \ref{thm_amain}, we estimate the joint distribution of the angles and $|L \cap S'|$ for an annulus $S$ centered at origin. In Theorem \ref{thm_moment}, we compute the joint distribution of the angles and $|L \cap S'_i|$ for many choices of annuli $S_i$. Theorem \ref{thm_star} follows immediately from Theorem \ref{thm_moment}.

\begin{Theorem} \label{thm_amain}
Suppose $f: (\mathbb{R}^n)^N \rightarrow \mathbb{R}$ satisfies all the conditions mentioned earlier in this section: $f$ is a Borel measurable, bounded, and compactly supported function of the form
\begin{equation*}
f(x_1, \ldots, x_N) = \rho(r_1, \ldots, r_N) \alpha_T(\{\theta_{i,j}\}_{1 \leq j < i \leq N})
\end{equation*}
where $T \subseteq [0,\pi]^{\binom{N}{2}}$ is centrally symmetric, and
\begin{equation*}
\rho(r_1, \ldots, r_N) = \prod_{i=1}^N I(Ur_i^n \in [s,t])
\end{equation*}
for some $s,t \in \mathbb{R}_{\geq 0}$. We write $V = t - s$ for the volume of $B(s,t)$.

Let $v_1 = v_1(L)$ be the shortest nonzero vector of $L'=\{v \in L:$ the first nonzero coordinate of $v$ is positive$\}$ that is not contained in $B(s)$, $v_2 = v_2(L)$ the next shortest vector, and so on. Then there exist absolute constants $C, c > 0$ such that for any $V, N \leq Cn$
\begin{equation} \label{eq_amain}
A(T) Q(V/2,N) - e^{-cn} < \int_{X_n} f(v_1, \ldots, v_N) d\mu_n < A(T) Q(V/2,N) + e^{-cn}.
\end{equation}
\end{Theorem}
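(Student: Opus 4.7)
The plan is to combine Schmidt's sieve (Section 2) with the polar coordinate calculation \eqref{eq_asep}, building on the proof of Theorem \ref{thm_lmain} by incorporating the angular factor $\alpha_T$ into each $h$-tuple count. First I rewrite the integrand: since $v_1, \ldots, v_N$ are the shortest vectors of $L' \setminus B(s)$ and $\rho$ forces each into the annulus $S := B(s,t)$,
\[
f(v_1, \ldots, v_N) = \alpha_T(\theta(v_1, \ldots, v_N)) \cdot I(|L \cap S'| \geq N).
\]
Thus the integral is the joint probability of a Poisson-like length event $\{|L \cap S'| \geq N\}$ (with mean $Q(V/2, N)$ by Theorem \ref{thm_lmain}) and an angular event $\{\theta(v_1, \ldots, v_N) \in T\}$ (whose ``Haar probability'' is $A(T)$ by \eqref{eq_asep}). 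One expects these events to be nearly independent, so the answer is $A(T) \cdot Q(V/2, N)$ up to the usual exponentially small sieve error.

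To make this rigorous, I imitate the proof of Theorem \ref{thm_lmain} with a weighted sieve. For each $h \geq N$ define
\[
S^f_{S',h}(L) := \sum_{\substack{E \subset L \cap S' \\ |E| = h,\, \mathrm{indep}}} \alpha_T(\theta(E_{\min N})), \qquad T^f_{S',h}(L) := \sum_{\substack{E \subset L \cap S' \\ |E| = h,\, \mathrm{corank} \leq 1}} \alpha_T(\theta(E_{\min N})),
\]
where $E_{\min N}$ denotes the $N$ shortest members of $E$ in length-sorted order. Since $\alpha_T \in \{0,1\}$, these are cardinalities of the ``$T$-good'' sub-families of the set systems appearing in Lemma \ref{lemma_alt}. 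Applying Lemmas \ref{lemma_alt} and \ref{lemma_ssieve} to the sub-families should yield a sandwich
\[
\sum_{h=N}^{\alpha} (-1)^{h-N} \binom{h-1}{N-1} S^f_{S',h}(L) \leq f(v_1, \ldots, v_N) \leq \sum_{h=N}^{\beta} (-1)^{h-N} \binom{h-1}{N-1} T^f_{S',h}(L)
\]
for $\alpha - N$ odd and $\beta - N$ even, exactly parallel to \eqref{eq_toestimate}.

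To evaluate the main term $\int_{X_n} S^f_{S',h}\, d\mu_n$, apply Rogers's formula on linearly independent $h$-tuples (no error), reducing it to a Lebesgue integral on $(\mathbb{R}^n)^h$. Using the symmetry in the $h$ variables, the central symmetry of $T$, and the polar decomposition \eqref{eq_asep} applied to the shortest $N$ coordinates (the remaining $h - N$ coordinates contributing purely radial factors), this Lebesgue integral factors into the angular contribution $A(T)$ and a radial piece that assembles to $(V/2)^h/h!$ as in Lemma \ref{lemma_integral}. The alternating sum then telescopes to $A(T) \cdot Q(V/2, N)$ via Lemma \ref{lemma_lmainterm}, and the discrepancy between indep and corank-$\leq 1$ counts contributes $O(e^{-cn})$ via Lemma \ref{lemma_integral}, giving \eqref{eq_amain}.

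The main obstacle is the monotonicity needed to invoke Lemma \ref{lemma_alt} on the $T$-good sub-families: the weight $\alpha_T(\theta(E_{\min N}))$ depends on which $N$ points of $E$ are shortest, and this can change erratically under $E \mapsto E \cup \{p\}$, so the ``injection/surjection'' proof of Lemma \ref{lemma_alt} does not transport automatically. I expect to bypass this by further restricting, at cost $O(e^{-cn})$, to the large event from Propositions \ref{prop_indep} and \ref{prop_s=m} where $L \cap S'$ is linearly independent and its $N$ shortest vectors coincide with the first $N$ successive minima of $L$. On this event the shortest $N$ of every independent $h$-subset $E \supseteq \{v_1, \ldots, v_N\}$ is exactly $\{v_1, \ldots, v_N\}$, so the weight stabilizes to the fixed value $\alpha_T(\theta(v_1, \ldots, v_N))$, and the monotonicity on the $T$-good sub-family reduces by a routine check to that of the unweighted families already established in Section 3.
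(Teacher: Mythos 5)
Your reduction $f(v_1,\ldots,v_N)=\alpha_T(\theta(v_1,\ldots,v_N))\cdot I(|L\cap S'|\geq N)$ is correct, and the ``first evaluate the Lebesgue integral via \eqref{eq_asep}, then invoke Lemma \ref{lemma_lmainterm}'' plan for the averaged sieve is also sound. The problem is at the heart of the argument: the weighted subset-sieve you propose does \emph{not} telescope to the target, and the fix you offer does not repair it.

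Concretely, take $N=1$ and suppose $L\cap S'=\{w_1,w_2\}$ with $\|w_1\|<\|w_2\|$ and $L\cap S'$ linearly independent (so your restriction event holds, and $S^f_{S',h}=T^f_{S',h}$). Then
\begin{equation*}
S^f_{S',1}=\alpha_T(\theta(w_1))+\alpha_T(\theta(w_2)),\qquad S^f_{S',2}=\alpha_T(\theta(\{w_1,w_2\}_{\min 1}))=\alpha_T(\theta(w_1)),
\end{equation*}
so with $\alpha=2$ (so $\alpha-N$ odd) the lower-bound sieve gives
\begin{equation*}
\sum_{h=1}^{2}(-1)^{h-1}\binom{h-1}{0}S^f_{S',h}=\alpha_T(\theta(w_2)),
\end{equation*}
which need not be $\leq f(v_1)=\alpha_T(\theta(w_1))$. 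More generally, decomposing $S^f_{S',h}=\sum_{|F|=N}\alpha_T(\theta(F))\binom{M-i_N(F)}{h-N}$ (where $i_N(F)$ is the index in the length-ordering of the longest element of $F$) shows the alternating sum $\sum_h(-1)^{h-N}\binom{h-1}{N-1}S^f_{S',h}$ assigns to each $F$ the coefficient $\sum_j(-1)^j\binom{N+j-1}{j}\binom{M-i_N(F)}{j}$, which for $N=1$ is $I(i_N(F)=M)$: the sieve selects the weight at the \emph{longest} vectors, not the shortest. The reason your restriction to the large event from Propositions \ref{prop_indep} and \ref{prop_s=m} does not save this is that $S^f_{S',h}$ sums over \emph{all} independent $h$-subsets $E$, not only those with $E\supseteq\{v_1,\ldots,v_N\}$; for $E$ missing some $v_i$, $E_{\min N}\neq\{v_1,\ldots,v_N\}$ and the weight is uncontrolled. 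Schmidt's unweighted sieve is agnostic to this because $\binom{M}{h}$ does not see which subsets are which, but the weighted version is not.

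The paper avoids this by switching to S\"odergren's ordered-tuple sieve rather than Schmidt's subset sieve. The key object is $R^n_j(L)$ in \eqref{eq_R}: one fixes a length-ordered independent $N$-tuple $(x_1<\ldots<x_N)$ and then counts, with alternating corank conditions, the $j$-tuples of ``gap fillers'' $x_{N+1}<\ldots<x_{N+j}$ constrained by $(s/U)^{1/n}\leq\|x_{N+1}\|<\ldots<\|x_{N+j}\|<\|x_N\|$. That inner constraint is precisely what your scheme lacks: the gap fillers witness whether $(x_1,\ldots,x_N)$ is \emph{really} $(v_1,\ldots,v_N)$, so the inner alternating sum $\sum_j(-1)^j\pi_j$ (bounded via Lemmas \ref{lemma_alt} and \ref{lemma_ssieve} applied to the gap-fillers living in $P=L\cap B'(s,U\|x_N\|^n)\setminus\{x_1,\ldots,x_N\}$) equals $1$ exactly when $(x_1,\ldots,x_N)=(v_1,\ldots,v_N)$ and is $\geq 0$ (resp. $\leq 0$) otherwise, giving the sandwich $S^n_l(L)\gtrless f(v_1,\ldots,v_N)$ and then the averaging proceeds as you anticipated via Rogers's formula, \eqref{eq_asep}, and Lemma \ref{lemma_lmainterm}. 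If you want to rescue your write-up, you should replace your $S^f_{S',h}$ sieve with this ordered-tuple sieve; the rest of your proposal (factoring out $A(T)$ and running the error analysis of Theorem \ref{thm_lmain}) then goes through as you describe.
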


\begin{proof}
The argument is a hybrid of the sieve ideas of Schmidt \cite{Schmidt1} \cite{Schmidt2} and S\"odergren \cite{Sangle}. It was necessary to incorporate S\"odergren's method because, in order to discuss the angles, we need to give some ordering on the lattice vectors. We will give a proof for the upper bound only, as the lower bound can be proved in the same way, simply by switching the choices of the parities.

Before we start, let us introduce one convenient notation; we will write
\begin{equation*}
f(x_1 < \ldots < x_N) := f(x_1, \ldots, x_N)I(\|x_1\| < \ldots < \|x_N\|).
\end{equation*}

Define, as in the proof of Theorem 5.2 of \cite{Sangle},

\begin{equation} \label{eq_R}
R^n_l(L) := \sum_{(x_1, \ldots, x_{N+l})} f(x_1 < \ldots < x_N)I(\mbox{$(s/U)^{\frac{1}{n}} \leq \|x_{N+1}\| < \ldots < \|x_{N+l}\| < \|x_N\|$}), 
\end{equation}
where we sum over all $(N+l)$-tuples $(x_1, \ldots, x_{N+l})$ of pairwise distinct nonzero vectors of $L'$, such that $\{x_1, \ldots, x_N\}$ is linearly independent, and that $\{x_1, \ldots, x_{N+l}\}$ has corank 0 if $l$ is odd, corank $\leq 1$ if $l$ is even. By Lemma 5.1 of \cite{Sangle}, it does no harm to assume that different vectors of $L'$ have different lengths.

Also define
\begin{equation} \label{eq_S}
S^n_l(L) := \sum_{j=0}^l (-1)^j R^n_j(L).
\end{equation}

The point is that we can rewrite \eqref{eq_S} as
\begin{equation} \label{eq_sssieve}
S^n_l(L) = \sum_{x_1, \ldots, x_N \in L' \atop \mbox{\tiny independent}} f(x_1 < \ldots < x_N) \sum_{j=0}^l (-1)^j \pi_j,
\end{equation}
where $\pi_j$ equals 1 if $j = 0$, otherwise equals the number of the $j$-tuples $(x_{N+1}, \ldots, x_{N+j})$ in $L'- \{x_1, \ldots, x_N\}$ such that $(s/U)^\frac{1}{n} \leq \|x_{N+1}\| < \ldots < \|x_{N+j}\| < \|x_N\|$ and the corank of $\{x_1, \ldots, x_{N+j}\}$ equals 0 if $j$ is odd, $\leq 1$ if $j$ is even.

Fix $x_1, \ldots, x_N \in L'$, and let $P = L \cap B'(s, U\|x_N\|^n) \backslash \{x_1, \ldots, x_N\}$, and write $M = |P|$. For $j = 1, \ldots, l$, let $A_j$ and $A_j'$ to be the set of subsets $\{x_{N+1}, \ldots, x_{N+j}\} \subseteq P$ such that $\{x_1, \ldots, x_{N+j}\}$ has corank 0 and $\leq 1$, respectively. Then Lemma \ref{lemma_alt} applies, and furthermore $\pi_j = |A_j|$ for odd $j$, and $\pi_j = |A'_j|$ for even $j$. Applying Lemma \ref{lemma_ssieve} with $k=1, \alpha=l$ even, $\sigma_0=1$, and $\pi_j = \binom{M}{j}\sigma_j$, we see that the inner summation in \eqref{eq_sssieve} is always nonnegative. Therefore
\begin{equation*}
S^n_l(L) \geq f(v_1,\ldots,v_N).
\end{equation*}
(To obtain the other inequality $S^n_l(L) \leq f(v_1,\ldots,v_N)$ for an odd $l$, switch all the parities in the above argument, and apply Lemmas \ref{lemma_alt} and \ref{lemma_ssieve} in the same manner, setting $k = 1$, $\alpha = l$, $\pi_0 = 1$ as before, but using the statements in $\tau_j$'s instead of those in $\sigma_j$'s. Then one has $\sum_{j=0}^l (-1)^j\pi_j \leq 0$, unless $(x_1, \ldots, x_N) = (v_1, \ldots, v_N)$, in which case $\pi_0=1$ but $\pi_1 = \ldots = \pi_l = 0$; in particular, Lemma \ref{lemma_ssieve} does not apply in this case.)

It remains to estimate the average of $S^n_l$. We first estimate $\int_{X_n} R^n_j d\mu_n$. The contribution to $\int_{X_n} R^n_j d\mu_n$ of the corank 0 tuples equals
\begin{align} \label{eq_comb}
&\frac{1}{2^{N+j}}\int \ldots \int f(x_{1} < \ldots < x_{N})I(\mbox{$\left(\frac{s}{U}\right)^\frac{1}{n} \leq \|x_{N+1}\| < \ldots < \|x_{N+j}\| < \|x_{N}\|$}) dx_1 \ldots dx_{N+j} \\ \notag
&= \frac{1}{2^{N+j}}\frac{(N+j-1)!}{(N-1)!j!(N+j)!} \int \ldots \int f(x_{1}, \ldots, x_{N})I(\mbox{$\left(\frac{s}{U}\right)^\frac{1}{n} \leq \|x_{N+1}\|, \ldots, \|x_{N+j}\| \leq \left(\frac{t}{U}\right)^\frac{1}{n}$}) \\ \notag
&dx_1 \ldots dx_{N+j}\\ \notag
&= A(T) \cdot \binom{N+j-1}{N-1} \frac{\left(V/2\right)^{N+j}}{(N+j)!}.
\end{align}

The first line is the Rogers integration formula \eqref{eq_Rogers}; the $\frac{1}{2^{N+j}}$ factor is to account for the sign variations. The factorials in the second line appear because, of all $(N+j)!$ orderings of the $(N+j)$-tuple $(x_1, \ldots, x_{N+j})$, exactly $\frac{(N+j-1)!}{(N-1)!j!}$ of them satisfy the conditions $\|x_1\| < \ldots < \|x_N\|$ and $\|x_{N+1}\| < \ldots < \|x_{N+j}\| < \|x_N\|$. The third line follows by \eqref{eq_asep}.

As for the contribution of corank 1 tuples, we simply bound it from above by
\begin{equation*}
\frac{(V/2)^{N+j-1}}{(N+j-1)!}\left( 3^{N+j}\left(\frac{3}{4}\right)^\frac{n}{2} + 5^{N+j}\left(\frac{1}{2}\right)^n\right)
\end{equation*}
using Lemma \ref{lemma_integral}.

Now take $l$ to be the greatest even number such that $N+l < n$. So far, we have shown that $\int_{X_n} S^n_l(L) d\mu_n$ is bounded from above by
\begin{align*}
& A(T) \cdot \sum_{j=0}^{l} (-1)^{j}\binom{N+j-1}{N-1}\frac{(V/2)^{N+j}}{(N+j)!} \\
& + \sum_{j=0}^{l} \frac{(V/2)^{N+j-1}}{(N+j-1)!}\left(3^{N+j}\left(\frac{3}{4}\right)^{\frac{n}{2}} + 5^{N+j}\left(\frac{1}{2}\right)^n\right).
\end{align*}

But estimating this quantity is already done in the proof of Theorem \ref{thm_lmain}. Furthermore, thanks to Proposition \ref{prop_indep}, at the cost of an exponentially small error in $n$, we could ignore the cases where the $v_i$'s are linearly dependent. This proves the upper bound of \eqref{eq_amain}. As mentioned earlier in the proof, the lower bound can be proved similarly.
\end{proof}

\begin{Remark}
For the vectors $x_1, \ldots, x_N$ randomly and uniformly chosen from $S^{n-1}$, one may ask about the distribution of $\phi_{i,j} := \arccos (\frac{x_i}{\|x_i\|} \cdot \frac{x_j}{\|x_j\|})$ instead of $\theta_{i,j}$, and whether the distribution converges to a product of error functions, as in S\"odergren \cite{Sangle}. In Section 3 of \cite{Sangle}, he first computes the distribution of $\{\theta_{i,j}\}_{1 \leq j < i \leq N}$, and then shows that $\phi_{i,j}$ may be approximated by $\theta_{i,j}$. Near the end of the section, he shows $\tilde{\alpha}_{i,j} = t_{i,j} + O(n^{-1/2})$ for each $1 \leq i < j \leq N$, where $\tilde{\alpha}_{i,j}$ and $t_{i,j}$ are some normalizations of $\phi_{i,j}$ and $\theta_{i,j}$ respectively. This approximation continues to be valid uniformly for all $1 \leq i < j \leq N$ if $N = o(\sqrt{n})$ --- i.e. all the error terms $O(n^{-1/2})$ can be controlled simultaneously --- but for a higher growth order this is no longer true.

Regarding the error function, S\"odergren \cite{Sangle} shows that, for fixed $\{t_{i,j}\}_{1 \leq j < i \leq N}$,
\begin{equation*}
\prod_{1 \leq j < i \leq N} \cos^{n-i-1}\left(\frac{t_{i,j}}{\sqrt{n}}\right) \sim \prod_{1 \leq j < i \leq N} e^{-t_{i,j}^2/2}
\end{equation*}
as $n \rightarrow \infty$, by interpreting $\cos\left({t_{i,j}}/{\sqrt{n}}\right) = \left(1 - t_{i,j}^2/2n + O(n^{-2})\right)$. Again, this holds for $N = o(\sqrt{n})$, but not for a higher growth order, because then the product of the error terms becomes a non-vanishing quantity in the $n$ limit.
\end{Remark}

\begin{Theorem} \label{thm_moment}
Let $d > 0$ and $k_1, \ldots, k_d \geq 0$ be integers. Write $N = \sum k_i$. Choose $d$ disjoint annuli $S_i = B(s_i, t_i)$ in the increasing order (so that $t_{i} \leq s_{i+1}$), and denote by $V_i = t_i - s_i$ the volume of $S_i$. Suppose $f: (\mathbb{R}^n)^N \rightarrow \mathbb{R}$ is of the form
\begin{equation*}
f(x_1, \ldots, x_N) = \rho(r_1, \ldots, r_N)\alpha_T(\{\theta_{i,j}\}_{1 \leq j < i \leq N}),
\end{equation*}
where $T \subseteq [0,\pi]^{\binom{N}{2}}$ is centrally symmetric, and $\rho$ is of the form
\begin{equation*}
\rho(r_1, \ldots, r_N) = I(\mbox{$r_{k_1+\ldots+k_i+1}, \ldots, r_{k_1+\ldots+k_i+k_{i+1}} \in [s_i, t_i]$ for all $1 \leq i \leq d$}).
\end{equation*}

Now choose any constant $D>0$. Then there exist constants $C, c > 0$, depending only on $D$, such that, provided $d < \frac{Dn}{\log n}$ and $N, \sum_{i=1}^d{V_i/2} < Cn$,
\begin{equation} \label{eq_moment}
 \int_{X_n} f(v_1, \ldots, v_N) d\mu_n = A(T)p(\frac{V_1}{2}, k_1) \ldots p(\frac{V_{d-1}}{2}, k_{d-1})Q(\frac{V_d}{2}, k_d) + O(e^{-cn}),
\end{equation}
where $v_i = v_i(L)$ be the $i$-th shortest vector of $L$ in $\bigcup S'_i$ as in the statement of Theorem \ref{thm_amain}, and $p(\lambda, k)$ is the probability mass function of the Poisson distribution of mean $\lambda$.
\end{Theorem}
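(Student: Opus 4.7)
The plan is to generalize the hybrid Schmidt--S\"odergren sieve of Theorem \ref{thm_amain} to a multi-dimensional sieve that simultaneously pins down the occupancy of each annulus $S_i$. For a multi-index $\vec{l} = (l_1, \ldots, l_d) \in \mathbb{Z}_{\geq 0}^d$ with $N + l_1 + \ldots + l_d < n$, I would define a counting function $R^n_{\vec{l}}(L)$ summing $f(x_1 < \ldots < x_N)$ over $(N+|\vec{l}|)$-tuples of distinct nonzero vectors of $L'$, in which the $x$'s are ordered by length and occupy the annuli according to $(k_1, \ldots, k_d)$, while $l_i$ extra $y$-vectors sit inside $S_i$ (disjoint from the $x$'s) for each $i < d$ and $l_d$ extra $y$-vectors sit inside $S_d$ in strictly increasing length, all shorter than $v_N$. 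As in Theorem \ref{thm_amain}, the corank of the full tuple is required to be $0$ if $|\vec{l}|$ is odd and $\leq 1$ if $|\vec{l}|$ is even, with the parities swapped to obtain the opposite bound.

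For each $i < d$ I would invoke the inclusion--exclusion identity $I(|L \cap S'_i| = k_i) = \sum_{l_i \geq 0}(-1)^{l_i}\binom{k_i+l_i}{k_i}\binom{|L \cap S'_i|}{k_i+l_i}$ to pin down the exact occupancy of $S_i$, while for $S_d$ I would use the Schmidt--S\"odergren alternating sum of Theorem \ref{thm_amain} to pin down that the prescribed $k_d$ trailing $v$'s are the first $k_d$ vectors of $L \cap S'_d$. Combining these yields an alternating multi-sum representation of $f(v_1, \ldots, v_N)$ modulo corank error, and applying Lemmas \ref{lemma_alt} and \ref{lemma_ssieve} coordinate-wise produces matching upper and lower bounds once each $l_i$ is truncated at some $\alpha_i$ of the appropriate parity.

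I would then evaluate the main (corank $0$) contribution to $\int_{X_n} R^n_{\vec{l}}\, d\mu_n$ via Rogers' formula through Lemma \ref{lemma_integral}, together with the factorization \eqref{eq_asep} that separates the radial and angular integrals. Because the annuli are pairwise disjoint, the radial integral factors as a product over $i$, each factor being a combinatorial multiple of $(V_i/2)^{k_i + l_i}$ coming from the $x$'s and $y$'s lying in $S_i$. The alternating series in $l_i$ then collapses by Taylor expansion to $p(V_i/2, k_i)$ for $i < d$ and to $Q(V_d/2, k_d)$ for $i = d$, reproducing the claimed main term $A(T)\prod_{i<d}p(V_i/2, k_i) \cdot Q(V_d/2, k_d)$.

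The hard part will be controlling the error when $d$ is as large as $Dn/\log n$. I will have to (i) choose truncation cutoffs $\alpha_i$ so that $\alpha_1 + \ldots + \alpha_d$ stays below $n-N$ while each per-annulus truncation error is negligible, and (ii) control the accumulated corank-$1$ contribution, which carries the $(3/4)^{n/2}$-type penalty of Lemma \ref{lemma_integral} but is multiplied across annuli by the combinatorial factor $\sum_{\vec{l}}\prod_i\binom{k_i+l_i}{k_i}$. Setting each $\alpha_i$ equal to a small multiple of $\max(V_i, k_i) + O(1)$ should keep every per-annulus truncation error exponentially small in $\max(V_i, k_i)$, while Proposition \ref{prop_indep} applied to the global support $\bigcup S'_i$ bounds the contribution of linearly dependent $v$-tuples by $e^{-cn}$. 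The hypothesis $d = O(n/\log n)$ is exactly what makes the resulting combined sieve overhead, roughly $e^{O(d \log\log n)}$, harmless against the $e^{-cn}$ savings coming from Lemma \ref{lemma_integral} and Proposition \ref{prop_indep}.
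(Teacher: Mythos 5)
Your proposal takes a genuinely different route from the paper, and while the intuition behind it is sound, there are substantive gaps that would require considerable work to close.

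The paper does not build a multi-index sieve at all. Instead it reuses the single-index sieve $R^n_j$ from Theorem \ref{thm_amain} (a single truncation parameter $l$, giving a simplex truncation $\sum\alpha_i \le l$ in multi-degree space), and the proof consists of showing that the sieve expansion \eqref{eq_pcomp} agrees, coefficient by coefficient in $(V_1/2)^{k_1+\alpha_1}\cdots(V_d/2)^{k_d+\alpha_d}$, with the Taylor expansion \eqref{eq_pexp} of the product $\prod_{i<d}p(V_i/2,k_i)\cdot Q(V_d/2,k_d)$, and then bounding the tail. The key observation --- which your approach misses --- is that the single-index sieve \emph{already} produces exactly the right multi-variable polynomial; the per-annulus distribution of the extra sieve vectors shows up automatically in the combinatorial reorderings counted in \eqref{eq_comb}, and one never needs to pin down the occupancy of each annulus separately. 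This is what makes the paper's proof essentially a two-paragraph reduction to Theorem \ref{thm_amain}.

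Your multi-index sieve $R^n_{\vec l}$ with a box truncation $l_i \le \alpha_i$ faces several real obstacles. First, the corank condition is a condition on the entire $(N+|\vec l|)$-tuple, so it ties the annuli together; Lemmas \ref{lemma_alt} and \ref{lemma_ssieve} are strictly one-dimensional (one element added or removed at a time, one summation-by-parts), and neither admits an obvious ``coordinate-wise'' extension. In particular, once you truncate at a box with mixed parities $\alpha_1,\ldots,\alpha_d$, it is not clear the multi-dimensional analogue of Abel summation produces a one-sided bound on $f(v_1,\ldots,v_N)$; the paper sidesteps this entirely by truncating at a single global parity. Second, your inclusion--exclusion identity for $i<d$ has coefficients $\binom{k_i+l_i}{k_i}$, not the $\binom{h-1}{k-1}$ appearing in Lemma \ref{lemma_binomial} and Lemma \ref{lemma_ssieve}; these lemmas are tailored to the tail $Q(\lambda,k)$, and a point-mass version would need to be developed and proved separately (the partial-sum sign pattern does in fact match, but that has to be verified and fed through a modified Lemma \ref{lemma_ssieve}). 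Third, your accounting of the sieve overhead as $e^{O(d\log\log n)}$ does not reflect what actually drives the constraint $d<Dn/\log n$: in the paper the relevant quantity is the number $\sim j^{d-1}$ of multi-degrees at total degree $N+j$ with $j\gtrsim(1-C)n$, giving a factor $e^{(d-1)\log j}\approx e^{Dn}$ that must be beaten by the $e^{-Lj}$ decay of the individual monomials --- a much larger overhead than $e^{O(d\log\log n)}$, and one whose cancellation depends on a careful Lagrange-multiplier/Stirling estimate that your sketch does not engage with.

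In short, your approach would amount to building a genuinely new multi-dimensional Schmidt sieve and proving new sieve lemmas, whereas the paper shows this is unnecessary: the existing one-dimensional sieve of Theorem \ref{thm_amain}, applied over the union of annuli, already encodes the product structure once you track multi-degrees in the $V_i$.
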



\begin{proof}
All the arguments in the proof of Theorem \ref{thm_amain} go through with $s_1 = s$ and $t_d = t$, except for the part corresponding to \eqref{eq_comb} where we check that $\int_{X_n}R^n_j d\mu_n$ has the correct coefficient that matches that of the Taylor series of $Q(V/2, k)$. The $A(T)$ factor, however, is still easily seen to be separated from the rest, so throughout this proof we assume $T = [0,\pi]^{\binom{N}{2}}$ and ignore the angles issue altogether.

We need to compare the coefficients of $(V_1/2)^{k_1 + \alpha_1} \ldots (V_d/2)^{k_d + \alpha_d}$ on each side of \eqref{eq_moment}. The main term on the right-hand side equals
\begin{align} \label{eq_pexp}
& \left(\sum_{h_1 = 0}^\infty (-1)^{h_1}\frac{(V_1/2)^{k_1+h_1}}{k_1!h_1!}\right) \ldots \left(\sum_{h_{d-1} = 0}^\infty (-1)^{h_{d-1}}\frac{(V_{d-1}/2)^{k_{d-1}+h_{d-1}}}{k_{d-1}!h_{d-1}!}\right) \\
&\times \left(\sum_{h_d = k_d}^\infty (-1)^{h_d-k_d} \binom{h_d-1}{k_d-1} \frac{(V_d/2)^{h_d}}{h_d!}\right). \notag
\end{align}

It can be checked easily that the $(k_1 + \alpha_1, \ldots, k_d + \alpha_d)$-th coefficient of this expression equals $(-1)^{\sum \alpha_i}$ times
\begin{equation*}
C(\alpha_1, \ldots, \alpha_d) = \frac{\binom{k_1+\alpha_1}{k_1} \ldots \binom{k_{d-1}+\alpha_{d-1}}{k_{d-1}}\binom{k_d+\alpha_d-1}{k_d -1}}{(k_1+\alpha_1)! \ldots (k_d + \alpha_d)!}.
\end{equation*}

On the left-hand side of \eqref{eq_moment}, the term with degree $(k_1 + \alpha_1, \ldots, k_d + \alpha_d)$ comes from $\int R^n_j$ for $j = \alpha_1 + \ldots + \alpha_d$. For each $(N + j)$-tuple $(x_1, \ldots, x_{N+j})$ which has $k_i + \alpha_i$ elements in $B(s_i, t_i)$ --- so that it contributes to the term with the said degree upon integrating --- there are $(k_1+\alpha_1)! \ldots (k_d + \alpha_d)!$ reorderings of the tuple that make $f(x_1, \ldots, x_N)I((s/U)^\frac{1}{n} \leq x_{N+1}, \ldots, x_{N+j})$ nonzero, and $\binom{k_1+\alpha_1}{k_1} \ldots \binom{k_{d-1}+\alpha_{d-1}}{k_{d-1}}\binom{k_d+\alpha_d-1}{k_d -1}$ reorderings that satisfy $x_1 < \ldots < x_N$ and $x_{N+1} < \ldots < x_{N+j} < x_N$. This shows that the coefficient from the left-hand side of \eqref{eq_moment} agrees with $C(\alpha_1, \ldots, \alpha_d)$ above. Hence the intended main term of $\int f(v_1, \ldots, v_N)$ equals
\begin{equation} \label{eq_pcomp}
\sum_{(\alpha_i) \in P} (-1)^{\sum \alpha_i} C(\alpha_1, \ldots, \alpha_d) \prod_{i=1}^d \left(\frac{V_i}{2}\right)^{k_i + \alpha_i},
\end{equation}
where $P := \{(\alpha_1, \ldots, \alpha_d) \in \mathbb{Z}^d_{\geq 0} : \sum \alpha_i \leq l\}$ with $l \approx (1-C)n$ --- this $l$ plays the same role as in the proof of Theorem \ref{thm_amain}.

It remains to show that \eqref{eq_pcomp} is close to \eqref{eq_pexp} given all the assumptions. To this end, we consider the terms of total degree $N+j$ for each $j$. We will show that for $j > l$, the sum of the coefficients of such terms are at most $e^{-Lj}$ in absolute value for some constant $L > 0$. This will prove \eqref{eq_moment}.

We start by noticing that
\begin{equation} \label{eq_term}
C(\alpha_1, \ldots, \alpha_d)\prod_{i=1}^d \left(\frac{V_i}{2}\right)^{k_i + \alpha_i} = \frac{k_d}{k_d + \alpha_d} \left(\prod_{i=1}^d \frac{(V_i/2)^{k_i}}{k_i!}\right) \cdot \left(\prod_{i=1}^d \frac{(V_i/2)^{\alpha_i}}{\alpha_i!}\right).
\end{equation}

Suppose $\sum \alpha_i = j$. Using the Lagrange multiplier, we see that the expression of form $\prod_{i=1}^d {x_i}^{a_i}$, with $a_i > 0$ fixed and $\sum x_i$ constant, is maximized when $x_i = \frac{\sum x_j}{\sum a_j} \cdot a_i$. By applying this fact to the first product on the right-hand side of \eqref{eq_term}, and then using the Stirling's formula, we find that it is at most $\mathrm{exp}\left(N(1+\log Cn - \log N)\right)$; since this is an increasing function of $N$ for $N < Cn$, the supremum is attained for $N = Cn$. Similarly, the second product is at most $\mathrm{exp}\left(j (1+\log (Cn/j))\right)$. (If any $\alpha_i$ or $k_i$ equals $0$ it only improves the estimate.) Also, since there are up to correct order $j^{d-1}$ terms of total degree $N+j$, the sum of \eqref{eq_term} is more or less bounded by
\begin{equation} \label{ribbit1}
\exp(Cn + (d-1)\log j + j(1+\log(Cn/j))).
\end{equation}

As promised just earlier, we claim that this is at most $e^{-Lj}$ for some $L > 0$. Recall $j > l \approx (1-C)n$. Evaluated at $j = (1-C)n$, the expression inside the exponent is at most
\begin{equation*}
(D+1)n + (1-C)n\log\frac{C}{1-C},
\end{equation*}
which is less than a negative constant times $j$, say $-Lj$, if $C$ is sufficiently small. In addition, differentiating the exponent of \eqref{ribbit1} by $j$, we obtain that the derivative is at most
\begin{equation*}
\frac{D}{(1-C)\log n} +\log{Cn} - \log j < \frac{D}{(1-C)\log n} + \log\frac{C}{1-C},
\end{equation*}
which is less than a negative constant, again say $-L$, by adjusting the constants if necessary. Therefore \eqref{ribbit1} is less than $-e^{Lj}$ for all $ j > l$, as desired. This completes the proof of the theorem.
\end{proof}

Theorem \ref{thm_star} is now a quick corollary of Theorem \ref{thm_moment}. \eqref{eq_star} is nothing more than a paraphrase of \eqref{eq_moment}, except that the former prescribes the directions of every single vector via the $\alpha_i$'s, whereas the latter specifies only the relative angles of the vectors;  but this does not make \eqref{eq_moment} a weaker statement, because $\mu_n$ is invariant under rotation.

\appendix
\section{Proof of (1.3)}

We recall (1.3) from the main text:

\begin{equation} \tag{1.3}
\int_{X_n} \left| \int_0^\infty P(s,A,L)h(s)ds - \int_0^\infty \frac{1}{2}e^{-\frac{1}{2}s}h(s)ds \right| d\mu_n = o_n(1),
\end{equation}
where
\begin{equation*}
P(s,A,L) = \frac{1}{V/2}\sum_{j: x_j \in A} \delta(s-\nu_{j+1}(L) + \nu_j(L)),
\end{equation*}
(recall $x_j = x_j(L)$ is the $j$-th shortest nonzero vector of $L$, and $\nu_j = U_n\|x_j\|^n$, where $U_n = \frac{\pi^{n/2}}{\Gamma(n/2+1)}$) and $h: \mathbb{R}_{>0} \rightarrow \mathbb{R}$ is a compactly supported smooth function, in particular away from zero. 

The goal of this appendix is to prove this statement. For $\varepsilon > 0$, define $h_n(s) = h(k\varepsilon)$ if $s \in [k\varepsilon, (k+1)\varepsilon)$ (we understand $h(0) = 0$). Later we will determine $\varepsilon$ to be an explicit $o_n(1)$ function, so it suffices to prove (1.3) with $h$ replaced by $h_n$.

We claim also that we can replace the definition of $P(s,A,L)$ by
\begin{equation} \label{eq_PL}
P(s,N(A),L) = \frac{1}{N(A)}\sum_{j: x_j \in A} \delta(s-\nu_{j+1}(L) + \nu_j(L)),
\end{equation}
where $N(A) = \frac{1}{2}|L \backslash \{0\} \cap A|$. By Proposition 3.3, if $1 < V < Cn$, the $\mu_n$-average of $N(A)$ equals $V/2$, and the standard deviation is $O(\sqrt{V})$. Hence by Chebyshev's inequality, $|N(A) - V/2| < V^{2/3}$, thus $|N(A)^{-1} - (V/2)^{-1}| = O(V^{-4/3})$, except on a set of measure $O(V^{-1/3})$. On that exceptional set, the integral (1.3) vanishes as $n \rightarrow \infty$ provided $V \rightarrow \infty$, because the integrand is bounded (because $h$ is supported away from zero).

Therefore our goal is to prove
\begin{equation} \label{eq_goal}
\int_{X_n} \left| \int_0^\infty P(s,N(A),L)h_n(s)ds - \int_0^\infty \frac{1}{2}e^{-\frac{1}{2}s}h_n(s)ds \right| d\mu_n = o_n(1).
\end{equation}

\vspace{8mm}

We will first show, by Theorem 1.4, that $P(s, N(A), L)$ here can be replaced by
\begin{equation} \label{eq_PP}
P(s, N(A), w) = \frac{1}{N(A)}\sum_{j: x_j \in A} \delta(s-\nu_{j+1}(w) + \nu_j(w)),
\end{equation}
where $w \in D(\mathcal{R}^n)$, and now $N(A) = |w \cap A/\{\pm 1\}|$, $x_j$ is the $j$-th shortest nonzero vector of $w$, and $\nu_j := U_n\|x_j\|^n$ (if one interprets $L \in X_n$ as an element of $D(\mathcal{R}^n)$ in the natural way, then the definitions \eqref{eq_PL} and \eqref{eq_PP} coincide), and that the integration over $(X_n, \mu_n)$ can be replaced by the integration over $(D(\mathcal{R}^n), \wp_n)$. Once this is done, proving \eqref{eq_goal} reduces to an exercise in probability theory.

Suppose $d \leq n/\log n$ is some positive integer, and $\lambda := \lambda(n)$ is an $o_n(1)$ function such that $\varepsilon$ is an integer multiple of $\lambda$ and $m := \varepsilon/\lambda \rightarrow \infty$ as $n \rightarrow \infty$, both of which we will explicitly determine later. Divide the annulus $A$ into $d$ consecutive annuli $B(s_1,t_1), \ldots, B(s_d, t_d)$ of equal volume $2\lambda$. For any $d$ nonnegative integers $a_1, \ldots, a_d$, let us write for shorthand
\begin{equation*}
\mathcal{B}(a_1, \ldots, a_d) := \mathcal{B}(s_1, t_1, \ldots, a_1, \ldots, a_d, S^{n-1}, \ldots, S^{n-1}).
\end{equation*}

Rewrite the left-hand side of \eqref{eq_goal} as
\begin{equation} \label{eq_goal_re}
\sum_{a_1, \ldots, a_d \in \mathbb{Z}_{\geq 0}} \int_{X_n \cap \mathcal{B}(a_1, \ldots, a_d)} \left| \sum_{k = 0}^\infty  \int_{k\varepsilon}^{(k+1)\varepsilon} h_n(s)\left(P(s,N(A),L) - \frac{1}{2}e^{-\frac{1}{2}s}\right)ds \right| d\mu_n.
\end{equation}

We truncate some of the outermost summation. By Theorem 1.4, the sum of the volumes of $\mathcal{B}(a_1, \ldots, a_d)$ for which $0 \leq a_1, \ldots, a_d \leq 1$ equals
\begin{equation*}
\left(p(\lambda,0) + p(\lambda,1)\right)^d + O(2^de^{-cn}) = \left(1 - \frac{\lambda^2}{2} + O(\lambda^{3}) \right)^d +O(2^de^{-cn}),
\end{equation*}
which approaches $1$ if $d\log 2 -cn < 0$ and $d\lambda^2 \rightarrow 0$, as $n \rightarrow \infty$. Therefore, supposing these, we can sum \eqref{eq_goal_re} over just $a_1, \ldots, a_d = \mbox{$0$ or $1$}$.

Next, given $a_1, \ldots, a_d = \mbox{$0$ or $1$}$, let $n_1$ be the smallest index such that $a_{n_1} = 1$, $n_2$ be the next such smallest index, and so on. We would like to replace $P(s, N(A), L)$ in the innermost parenthesis of \eqref{eq_goal_re} with its ``discretized'' version
\begin{equation*}
P(k, a_1, \ldots, a_d) = \frac{1}{\sum a_i} (\mbox{$\#$ of $i$'s such that $n_{i+1} - n_i \in [km, (k+1)m)$}),
\end{equation*}
(recall $m = \varepsilon/\lambda$ is an integer) and show that
\begin{equation*}
\int_{k\varepsilon}^{(k+1)\varepsilon} P(s, N(A), L) ds = P(k, a_1, \ldots, a_d)
\end{equation*}
for all $k$. This holds, except when $(a_1, \ldots, a_d)$ lies in ``boundary cases,'' i.e. $n_{i+1} - n_i = km$ for some positive integers $i$ and $k$, so that $\nu_{i+1} - \nu_i$ could be contained in $[(k-1)\varepsilon, k\varepsilon)$ rather than in $[k\varepsilon, (k+1)\varepsilon)$, making difference on the left-hand side of the above expression.

We resolve this situation by correcting the inner integral of \eqref{eq_goal_re} by some function $g_n(s,L)$, and showing that its integral is small. For each $L \in X_n$, $g_n(s,L)$ is defined as follows: if there are $t_k$ indices $i$ such that $n_{i+1} - n_i = km$ but $\nu_{i+1} - \nu_i \in [(k-1)\varepsilon, k\varepsilon)$, set $g_n(s,L) = t_k(h_n(k\varepsilon) - h_n((k-1)\varepsilon))$ for $s \in [k\varepsilon, (k+1)\varepsilon)$. Note $h_n(k\varepsilon) - h_n((k-1)\varepsilon) = O(\varepsilon)$ because $h$ is smooth; $\int_{0}^{\infty} \left| g_n(s,L) \right| ds = O(t\varepsilon^2)$, where $t = \sum_k t_k$ is the number of total corrections needed.

We need to show that
\begin{equation*}
\sum_{a_1, \ldots, a_d = \mathrm{0\,or\,1}} \int_{X_n \cap \mathcal{B}(a_1, \ldots, a_d)} \int_{0}^{\infty} \left| g_n(s,L) \right| ds d\mu_n
\end{equation*}
vanishes as $n \rightarrow \infty$. By Theorem 1.4, this is bounded by
\begin{equation*}
\sum_{a_1, \ldots, a_d = \mathrm{0\,or\,1}} p(\lambda,0)^{d-f}p(\lambda,1)^f\left|\{i : m | (n_{i+1} - n_i)\}\right| \cdot O(\varepsilon^2) + O(d2^de^{-cn}),
\end{equation*}
where $f = f(a_1, \ldots, a_d) = |\{i:a_i = 1\}|$. Rewriting the sum in terms of $f$, and leaving out the irrelevant parts, it turns out that we need to show
\begin{equation}\label{eq_hardpart0}
\sum_{f=1}^d p(\lambda,0)^{d-f}p(\lambda,1)^f \varepsilon^2 \sum_{1 \leq n_1 < \ldots < n_f \leq d} \left|\{i: m | (n_{i+1}-n_i) \}\right|
\end{equation}
is small. The idea is to compare the inner summation with $\binom{d}{f}$, and then appeal to the binomial theorem. We start by rewriting the inner sum as
\begin{equation}\label{eq_hardpart1}
\sum_{1 \leq n_1 < \ldots < n_f \leq d} \sum_{j=1}^{f-1} I(m|n_{j+1}-n_j) = \sum_{j=1}^{f-1}\sum_{1 \leq n_1 < \ldots < n_f \leq d} I(m|n_{j+1}-n_j).
\end{equation}

Fix a $1 \leq j \leq f-1$, and consider the inner sum on the right-hand side. If $n_{j+1} - n_j = km$ for some $k$, this determines $n_{j+1} = n_j + km$, and the choices for the remaining $n_i$'s, $i \neq j+1$, if possible at all, must be made within a set of size $d - km$. Therefore, \eqref{eq_hardpart1} equals
\begin{align*}
&\sum_{j=1}^{f-1}\sum_{k=1}^{\lfloor d/m \rfloor} \binom{d-km}{f-1} = \frac{1}{m}\sum_{j=1}^{f-1}\sum_{k=1}^{\lfloor d/m \rfloor} m\binom{d-km}{f-1} \\
&\leq \frac{1}{m}\sum_{j=1}^{f-1}\sum_{l=1}^{d} \binom{d-l}{f-1} \leq \frac{1}{m}f\binom{d}{f},
\end{align*}
by the hockey-stick identity. Note the saving by the factor of $1/m$ compared to the trivial bound. Applying this estimate, and the (derivative) of the binomial theorem
\begin{equation*}
d(\alpha + \beta)^{d-1} = \sum_{f=1}^d \alpha^{d-f}\beta^{f-1}f\binom{d}{f}
\end{equation*}
to \eqref{eq_hardpart0}, we see that it is bounded by
\begin{align*}
& d(p(\lambda,0) + p(\lambda,1))^{d-1} \cdot p(\lambda,1)\varepsilon^2/m \\
&< d(1-\lambda^2/2 + O(\lambda^3))^{(d-1)\lambda^2/\lambda^2} \cdot \lambda^2\varepsilon \\
&\ll d\lambda^2\varepsilon
\end{align*}
as desired, by our assumption that $d\lambda^2 \rightarrow 0$ as $n \rightarrow \infty$.

Hence we conclude that \eqref{eq_goal_re} equals, again by Theorem 1.4,




\begin{align*} 
&\sum_{a_1, \ldots, a_d = \mathrm{0\,or\,1}} \int_{X_n \cap \mathcal{B}(a_1, \ldots, a_d)} \left| \sum_{k = 0}^\infty  \int_{k\varepsilon}^{(k+1)\varepsilon}  h_n(k\varepsilon)\left(P(s,N(A),L) - \frac{1}{2}e^{-\frac{1}{2}s}\right) - g_n(k\varepsilon, L) ds \right| d\mu_n \\
&= \sum_{a_1, \ldots, a_d = \mathrm{0\,or\,1}} \left(\prod_{i=1}^{d} p(\lambda, a_i) + O(e^{-cn})\right) \left| \sum_{k = 0}^\infty   h_n(k\varepsilon)P(k, a_1, \ldots, a_d) - \int_{0}^{\infty}\frac{1}{2}e^{-\frac{1}{2}s}h_n(k\varepsilon)ds\right| \notag \\
& + O(d2^de^{-cn}) + O(d\lambda^2\varepsilon) \notag \\
&= \sum_{a_1, \ldots, a_d = \mathrm{0\,or\,1}} \prod_{i=1}^{d} p(\lambda, a_i) \left| \sum_{k = 0}^\infty  h_n(k\varepsilon)P(k, a_1, \ldots, a_d) - \int_{0}^{\infty}\frac{1}{2}e^{-\frac{1}{2}s}h_n(k\varepsilon)ds \right| \\
& + O(d2^de^{-cn}) + O(d\lambda^2\varepsilon). \notag
\end{align*}

By reverse-engineering what we have done to get this far from the left-hand side of \eqref{eq_goal}, this equals, up to errors that vanish in the limit, 
\begin{equation} \label{eq_goal_rere}
\int_{D(\mathcal{R}^n)} \left| \int_0^\infty P(s,N(A),w)h_n(s)ds - \int_0^\infty \frac{1}{2}e^{-\frac{1}{2}s}h_n(s)ds \right| d\wp_n,
\end{equation}
as desired.

\vspace{8mm}

Before moving on, we determine the quantities $d, \lambda, \varepsilon$. We summarize their required properties, as $n \rightarrow \infty$:
\begin{itemize}
\item $d \rightarrow \infty$ is an integer at most $n/\log n$, and $\varepsilon \rightarrow 0$, $\lambda \rightarrow 0$.
\item $V/2 = d\lambda \rightarrow \infty$.
\item $m = \varepsilon / \lambda \rightarrow \infty$ is an integer.
\item $d\log2 - cn < 0$.
\item $d\lambda^2 \rightarrow 0$.
\end{itemize}

It is enough to specify the growth rate of each variable. If we set $d \approx n^{0.99}, \lambda \approx n^{-0.5}$, and $\varepsilon \approx n^{-0.1}$, say, all the above conditions are fulfilled.

\vspace{8mm}

We prove that \eqref{eq_goal_rere} is $o_n(1)$. Write $N = V/2$ and, without loss of generality, assume $N$ is an integer. Let $X_1, X_2, \ldots$ be independent exponential distributions of rate $1/2$. Instead of integrating over $D(\mathcal{R}^n)$, we wish to integrate over $\mathbb{R}^N_{>0}$, and with respect to the measure imposed by $X_1, \ldots, X_N$ instead of $\wp_n$. To be precise, the goal is to reduce the estimation of \eqref{eq_goal_rere} to the estimation of
\begin{equation} \label{eq_goal_final}
\int_{\mathbb{R}^N_{>0}} \left| \int_0^\infty \frac{1}{N}\sum_{i=1}^N\delta(s-X_i)h_n(s)ds - \int_0^\infty \frac{1}{2}e^{-\frac{1}{2}s}h_n(s)ds \right| dX_1 \ldots dX_N.
\end{equation}

To this end, we will show first that
\begin{equation*}
\int_{D(\mathcal{R}^n)} \left|\int_0^\infty h_n(s)\left( P(s,N(A),w) - P(s,N,w)\right) ds\right| d\wp_n = O(N^{-1/3}),
\end{equation*}
where
\begin{equation*} \label{eq_compare}
P(s,N(A),w) = \frac{1}{N(A)} \sum_{j: x_j \in A} \delta(s-X_j(w))
\end{equation*}
as earlier (here $N(A) = |w \cap A/\{\pm 1\}|$ depends on $w$ as said earlier, and $X_j(w) := \nu_{j+1}(w) - \nu_j(w)$), and
\begin{equation*}
P(s,N,w) := \frac{1}{N} \sum_{j = \min\{i:x_i \in A\}}^{N+\min\{i:x_i \in A\}-1} \delta(s-X_j(w)).
\end{equation*}

By Chebyshev, $|N - N(A)| < N^{2/3}$ except on a set $E$ of $\wp_n$-measure $O(N^{-1/3})$. Since the integrand is bounded (again, because $h_n$ is supported away from zero), on $E$ the integral is $O(N^{-1/3})$ as desired. For each $w \not\in E$,
\begin{equation*}
P(s,N(A),w) - P(s,N,w) = \frac{N-N(A)}{N \cdot N(A)}\left(\mathrm{at\ most\ } N^{2/3}\ \mathrm{terms}\right),
\end{equation*}
so again the integral is $O(N^{-1/3})$ on $D(\mathcal{R}^n) \backslash E$. It is clear that in both cases the implied constants depend only on $h$.

Therefore we can replace $P(s,N(A),w)$ in \eqref{eq_goal_rere} with $P(s,N,w)$, which then becomes
\begin{equation*}
\int_{D(\mathcal{R}^n)} \left| \int_0^\infty \frac{1}{N} \sum_{j = \min\{i:x_i \in A\}}^{N+\min\{i:x_i \in A\}-1}\delta(s-X_j(w))h_n(s)ds - \int_0^\infty \frac{1}{2}e^{-\frac{1}{2}s}h_n(s)ds \right| d\wp_n.
\end{equation*}
But this equals \eqref{eq_goal_final}, because $X_j(w)$ are indeed independent exponential distributions of rate $1/2$.



Finally it remains to prove \eqref{eq_goal_final}. It can be done by invoking the central limit theorem and related machinery; here we give a low-tech alternative argument. Let $K = \sup\{x: x \in \mathrm{supp}\ h_n\}$, $p_k = \int_{k\varepsilon}^{(k+1)\varepsilon} \frac{1}{2}e^{-\frac{1}{2}s} ds$, $q_k = \frac{1}{N}\left|\{i:X_i \in [k\varepsilon, (k+1)\varepsilon)\}\right|$. Note that $Nq_k$ has the binomial distribution $B(N,p_k)$. Rewrite \eqref{eq_goal_final} into
\begin{equation*}
\int_{\mathbb{R}^N_{>0}} \left| \sum_{k=0}^{K/\varepsilon} h_n(k\varepsilon)\left(q_k - p_k \right) \right| dX_1 \ldots dX_N.
\end{equation*}

By Chebyshev's inequality again, for each $k$, $\mathrm{Prob}(N|q_k - p_k| > (Np_k)^{2/3}) < (1-p_k)(Np_k)^{-1/3} < (Np_k)^{-1/3}$. Since $p_k = \frac{1}{2}e^{-k\varepsilon/2}\varepsilon + O(\varepsilon^2)$ for all $0 \leq k \leq K/\varepsilon$, we have $p_k^{-1} = O(\varepsilon^{-1})$, and thus $\mathrm{Prob}(N|q_k - p_k| > (Np_k)^{2/3} \mathrm{\ for\ some\ } 0 \leq k \leq K/\varepsilon)$ is bounded from above by
\begin{equation*}
O((N\varepsilon)^{-1/3}) \cdot \frac{K}{\varepsilon} = O\left({d^{-1/3}\lambda^{-1/3}\varepsilon^{-4/3}}\right),
\end{equation*}
which is $O(n^{-0.03})$ with our prior determination of $d, \lambda, \varepsilon$. On the other hand, on the set satisfying $N|q_k - p_k| \leq (Np_k)^{2/3}$ for all $k$,
\begin{equation*}
\left|\sum_{k=0}^{K/\varepsilon} h_n(k\varepsilon)\left(q_k - p_k \right)\right| \leq \|h\|_\infty \cdot \frac{p_k^{2/3}}{N^{1/3}} \cdot \frac{K}{\varepsilon} = O(N^{-1/3}\varepsilon^{-1/3}),
\end{equation*}
which is also $o_n(1)$. This proves that \eqref{eq_goal_final} is $o_n(1)$, completing the proof of (1.3).

\end{document}